\theoremstyle{plain}
\newtheorem{thm}{Theorem}[section]
\newtheorem{lem}[thm]{Lemma}
\newtheorem{prop}[thm]{Proposition}
\newtheorem{cor}[thm]{Corollary}
\theoremstyle{definition}
\newtheorem{defn}{Definition}[section]
\theoremstyle{remark}
\newtheorem*{rem}{Remark}
\newcommand{\x}{^\times}
\DeclareMathOperator{\pd}{pd}
\title[A bound for the Milnor sum of projective plane curves]{A bound for the Milnor sum of projective plane curves in terms of GIT}
\author{Jaesun Shin}
\date{}
\address{Department of Mathematical Sciences, KAIST, 291 Daehak-ro, Yuseong-gu, Daejon 305-701, Korea}
\email{jsshin1991@kaist.ac.kr}
\begin{document}
\maketitle
\begin{abstract}
Let $C$ be a projective plane curve of degree $d$ whose singularities are all isolated. Suppose $C$ is not concurrent lines. P\l oski proved that the Milnor number of an isolated singlar point of $C$ is less than or equal to $(d-1)^{2}-\lfloor \frac{d}{2} \rfloor$. In this paper, we prove that the Milnor sum of $C$ is also less than or equal to $(d-1)^{2}-\lfloor \frac{d}{2} \rfloor$ and the equality holds if and only if $C$ is a P\l oski curve. Furthermore, we find a bound for the Milnor sum of projective plane curves in terms of GIT. 
\end{abstract}

\section{Introduction}

Let $C=V(f)$ be a projective plane curve of degree $d$. In this paper, a plane curve C means a projective plane curve that has at most isolated singularities. Moreover, we assume that $C$ is not concurrent lines.  We assume that the base field $k$ is algebraically closed and char($k$)=0. Let $f=0$ at [0,0,1]. Then, we define its Milnor number at 0 by
\begin{gather*}
\mu_{0}(f) = \dim_{k}(O_{0}/J_{f}),
\end{gather*}
where $O_{0}$ is a function germ of $f$ at the origin (in the sense of affine chart) and $J_{f} = (\partial f/ \partial x, \partial f / \partial y)$ is the Jacobian ideal of $f$. Since $\mu_{0}(f)$ is finite if and only if the origin is an isolated singular point, the Milnor number is closely related to the local properties of isolated singular points. In fact, the Milnor number has an important topological meaning. 

\begin{prop} \cite{Le}
The Milnor number is a topological invariant for IHS (isolated hypersurface singularities).
\end{prop}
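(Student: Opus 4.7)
The plan is to reduce the statement to Milnor's theorem that the Milnor fiber $F$ of an isolated hypersurface singularity $f: (\CC^{n+1}, 0) \to (\CC, 0)$ has the homotopy type of a bouquet of $\mu_{0}(f)$ copies of $S^{n}$, whence
\begin{gather*}
\mu_{0}(f) = \mathrm{rank}\, H_{n}(F; \ZZ) = (-1)^{n}(\chi(F) - 1).
\end{gather*}
The right-hand side is a homotopy invariant of $F$, so it suffices to exhibit $F$, up to homotopy equivalence, as a topological invariant of the embedded germ $(V(f), 0) \subset (\CC^{n+1}, 0)$.

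First I would pass to the link pair. By the conical structure theorem for analytic germs, for $\epsilon$ sufficiently small the pair $(B_{\epsilon}, V(f))$ is homeomorphic to the cone on $(S_{\epsilon}^{2n+1}, K)$, where $K = V(f) \cap S_{\epsilon}^{2n+1}$. Hence a homeomorphism of embedded germs descends, after shrinking, to a homeomorphism of link pairs $(S_{\epsilon}^{2n+1}, K_{1}) \cong (S_{\epsilon}^{2n+1}, K_{2})$ and so of the sphere complements $S_{\epsilon}^{2n+1} \setminus K_{i}$. Second, the Milnor fibration $f/|f|: S_{\epsilon}^{2n+1} \setminus K \to S^{1}$ represents a canonical class in $H^{1}(S_{\epsilon}^{2n+1} \setminus K; \ZZ)$, namely the class whose value on a positively oriented meridian of each branch of $K$ equals $1$; this class is intrinsic to the link pair.

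To finish, I would invoke the general fact that for any fibration $\phi: X \to S^{1}$ with fiber $F$, the infinite cyclic cover $\widetilde{X}$ classified by $[\phi] \in H^{1}(X; \ZZ)$ is homotopy equivalent to $F$. Since $\widetilde{X}$ depends only on $X$ together with this cohomology class, both of which have been shown to be topological invariants of the embedded germ, the homotopy type of $F$, and hence the integer $\mu_{0}(f)$, is a topological invariant.

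I expect the main obstacle to be the second step of the reduction: showing that the meridional (hence Milnor-fibration) class is intrinsic to the topological pair $(S_{\epsilon}^{2n+1}, K)$. The complex structure canonically orients each meridian, giving the class for free, but extracting this orientation from purely topological data is delicate because homeomorphisms need not respect the complex orientation on individual branches. Handling this carefully (for instance by reading the class off from the linking form, or by a local argument at each branch point of $K$) is the technical core of \cite{Le} and is also the point at which subsequent refinements by King and L\^e--Ramanujam enter.
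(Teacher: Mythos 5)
The paper gives no proof of this proposition --- it is imported verbatim from L\^e's note as a known result --- so there is no in-paper argument to compare against; what you have written is essentially a faithful reconstruction of the standard proof behind the citation, and its skeleton is correct. Milnor's bouquet theorem gives $\mu_{0}(f)=(-1)^{n}(\chi(F)-1)$; the conical structure theorem shows that a homeomorphism of embedded germs induces a homotopy equivalence of the complements $B_{\epsilon}\setminus V(f)\simeq S^{2n+1}_{\epsilon}\setminus K$; and the fiber of a fibration over $S^{1}$ is indeed recovered, up to homotopy, as the infinite cyclic cover classified by the fibration class. You have also located the genuine pressure point: showing that this class is intrinsic to the topological pair. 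Two remarks on that gap. First, for $n\ge 2$ an isolated hypersurface singularity is normal, so the link $K$ is connected, $H^{1}(S^{2n+1}_{\epsilon}\setminus K;\ZZ)\cong\ZZ$ by Alexander duality, and the two generators classify the same infinite cyclic cover; the orientation worry is therefore vacuous there. It bites only for $n=1$, which happens to be the one case this paper actually uses. Second, in the plane-curve case one can bypass the meridian-orientation issue entirely via Milnor's formula $\mu=2\delta-r+1$ together with the classical fact (Zariski, Burau) that the Puiseux characteristics of the branches and their pairwise intersection multiplicities --- hence $\delta$ and $r$ --- are determined by the link type.

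As submitted, then, your argument is a correct outline with one joint honestly left open rather than closed; either of the two devices above finishes it. A small historical correction: L\^e--Ramanujam concerns the converse direction ($\mu$-constant families have constant topological type for $n\neq 2$), not the invariance statement itself.
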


By the importance of the Milnor number for IHS, there are some critical results. One of them was proven by P\l oski which says that for a projective plane curve $C$ of degree $d$ whose singularities are all isolated, not concurrent lines, the Milnor number of an isolated singlar point of $C$ is less than or equal to $(d-1)^{2}-\lfloor \frac{d}{2} \rfloor$ with equality holds if and only if $C$ is a P\l oski curve (See \cite[Definition 1.9, 1.10]{IC}). By this result, for any given point of a projective plane curve which is not concurrent lines, we get an upper bound for the Milnor number which is useful for computing the Milnor number of a given point. Also, one of the others was that of Huh which gives an upper bound for the Milnor sum of projective hypersurfaces which are not the cone over a smooth hypersurface (see \cite[Theorem 1.1]{JH}). However, since the result of Huh applies to general cases, we can expect that a bound for the Milnor sum of projective plane curves can be reduced. So the purpose of this paper is to find an upper bound for the Milnor sum of a projective plane curve and to see how such a bound can be reduced by GIT conditions. In fact, without GIT conditions, we can get the following theorem which is one of our main results:

\begin{thm}
Let $C$ be a plane curve whose singularities are all isolated and $\deg C=d \ge 5$. Then, $\pd(C)=\lfloor \frac{d}{2} \rfloor$ if and only if C is a P\l oski curve. 
\end{thm}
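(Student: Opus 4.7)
The statement $\pd(C) = \lfloor d/2 \rfloor$ should, in the notation set up earlier in the paper, be equivalent to the Milnor sum $\mu(C) = \sum_{p} \mu_{p}(f)$ attaining the P\l oski bound $(d-1)^{2} - \lfloor d/2 \rfloor$. My plan is to prove each implication by reducing to P\l oski's local theorem plus a multi-singularity exclusion argument.

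For the easier direction ($\Leftarrow$), I would simply unwind the definition of a P\l oski curve from \cite[Def.~1.9, 1.10]{IC}. Such a curve is written down explicitly and, after a suitable projective change of coordinates, has a distinguished singular point $p$ whose local Milnor number already equals $(d-1)^{2} - \lfloor d/2 \rfloor$; a direct check of the defining equation shows that it is smooth everywhere else. Therefore $\mu(C) = \mu_{p}(f) = (d-1)^{2} - \lfloor d/2 \rfloor$ and $\pd(C) = \lfloor d/2 \rfloor$.

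For the substantive direction ($\Rightarrow$), assume $\pd(C) = \lfloor d/2 \rfloor$. By P\l oski's local inequality each $\mu_{p}(f) \le (d-1)^{2} - \lfloor d/2 \rfloor$, so the global equality $\sum_{p} \mu_{p}(f) = (d-1)^{2} - \lfloor d/2 \rfloor$ leaves only two possibilities: either a single point $p$ saturates the local bound while all other points are smooth, or at least two singular points together realise the bound with each one strictly below it. In the first case, the local P\l oski characterization \cite{IC} immediately forces $C$ to be a P\l oski curve, finishing the proof. The remaining task is to exclude the second case under the hypothesis $d \ge 5$.

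To rule out the multi-singularity case I would fix two distinct singular points $p_{1}, p_{2}$ of $C$, estimate each $\mu_{p_{i}}(f)$ in terms of the local multiplicity $m_{i} = \mathrm{mult}_{p_{i}}(f)$ via a Bezout-style bound (using for instance $C$ together with its polar curve, or the line joining $p_{1}$ and $p_{2}$), and then combine these with the global constraint $\sum_{i} m_{i}(m_{i}-1)/2 \le \binom{d}{2}$ and $m_{i} \le d-1$ coming from the intersection of $C$ with a generic line. Optimising this combinatorially over admissible multiplicity sequences should yield $\mu_{p_{1}}(f) + \mu_{p_{2}}(f) < (d-1)^{2} - \lfloor d/2 \rfloor$ whenever $d \ge 5$. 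The main obstacle is precisely this optimisation: for $d \le 4$ (cubics and quartics) small configurations of nodes or cusps can spread the Milnor sum across several points in a way that reaches or nearly reaches the P\l oski bound, so the argument has to cleanly exhibit where the gap $d \ge 5$ first produces strict inequality. I expect this numerical fine-tuning, rather than any part of the reduction, to be the only delicate step.
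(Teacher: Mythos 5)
Your reverse direction and your reduction of the forward direction to ``exclude the case of two or more singular points'' are both sound, and in fact they match the paper's skeleton: in its Case 1 the paper likewise reduces to showing that $\pd(C)=\lfloor d/2\rfloor$ forces a unique singular point and then invokes P\l oski's characterization (Lemma \ref{lem:Ploski}). The genuine gap is in the mechanism you propose for the exclusion step. You plan to bound each $\mu_{p_i}(f)$ from above ``in terms of the local multiplicity $m_i$'' and then optimise over multiplicity sequences subject to $\sum_i m_i(m_i-1)/2\le\binom{d}{2}$ and $m_i\le d-1$. No such upper bound exists: $\mu_p$ is not controlled by $m_p$, because high-order tangencies among smooth branches inflate $\mu_p=2\delta_p-r_p+1$ without changing the multiplicity. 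The P\l oski point itself is the counterexample — it has multiplicity $\lceil d/2\rceil$ but Milnor number $(d-1)^2-\lfloor d/2\rfloor$, far above $(m_p-1)^2$ or $m_p(m_p-1)$. The constraint $\sum\delta_p\ge\sum m_i(m_i-1)/2$ bounds the relevant quantities from below, not above, so the optimisation you describe is over the wrong invariant. Moreover the inequality you need is razor-thin: there are multi-singular configurations (e.g.\ the two-point all-conic curves $(*)$ and $(\star)$ of Lemma \ref{lem:impossible conics}, and line-plus-P\l oski configurations) whose Milnor sums come within $O(d)$ of the bound, so any Bezout-with-slack estimate will not separate them from the extremal value.

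What actually makes the exclusion tractable in the paper is the Fassarella--Medeiros decomposition: $\pd(C)=d-1+2p_g+\sum_p(r_p-1)$ for irreducible $C$ and $\pd(C\cup D)=\pd(C)+\pd(D)+\sharp(C\cap D)-1$ for unions. Writing $C=C_1\cdots C_k$, the hypothesis $\pd(C)=\lfloor d/2\rfloor$ becomes an exact budget on the geometric genera of the components and on the number of \emph{distinct} intersection points among them; Bezout then bounds these counts from below (two smooth components of degrees $a,b$ meeting at a single point must be tangent to order $ab$ there, etc.), and the case analysis on component degrees closes the argument, with the smooth-component and singular-component cases treated separately. If you want to salvage your outline, replace the multiplicity bookkeeping by this branch/intersection-point bookkeeping — equivalently, use $\sum_p\mu_p=(d-1)(d-2)+2(k-1)-2\sum_i p_g(\widetilde{C_i})-\sum_p(r_p-1)$ — since $\sum_p(r_p-1)$, not $\sum_p m_p$, is the quantity that Bezout pins down sharply enough to characterise equality.
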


Recall that the gradient map of $C=V(h)$, $grad(h): \mathbb{P}^{n} \dashrightarrow \mathbb{P}^{n}$, $[x,y,z] \mapsto [\frac{\partial h}{\partial x}, \frac{\partial h}{\partial y}, \frac{\partial h}{\partial z}]$, is a map obtained from the partial derivatives of $h$. Define the polar degree of a plane curve $C=V(h)$, denoted by $\pd(C)$, is the degree of a gradient map of $h$. There is a lemma that connects $\pd(C)$ with Milnor sum. 

\begin{lem} \cite[Proposition 2.3]{FM} \label{lem:Milnor formula}(Milnor formula) 
Let $C = V(h) \subset \mathbb{P}^{n}$ be a hypersurface with isolated singularities with $\deg(C) = d$. Then, 
\[
\pd(C) = (d-1)^{n} - \sum \mu_{p}(h),
\]
where $\mu_{p}(h)$ is the Milnor number of $h$ at $p$. 
\end{lem}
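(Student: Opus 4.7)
The plan is to compute $\pd(C)$ as a Bezout-style intersection number: pull back $n$ generic hyperplanes under the gradient map to obtain $n$ hypersurfaces of degree $d-1$ in $\mathbb{P}^{n}$, whose total intersection number is $(d-1)^{n}$, and then account for the fact that part of this intersection concentrates at the base locus of the gradient map---namely the singular locus of $C$---with local contributions equal to the Milnor numbers.

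To execute this, I would first fix a generic point $q=[a_{0}:\dots:a_{n}]$ in the target $\mathbb{P}^{n}$ together with $n$ generic hyperplanes $L_{1},\dots,L_{n}$ satisfying $L_{1}\cap\cdots\cap L_{n}=\{q\}$. Each pullback $\mathrm{grad}(h)^{*}L_{i}$ is the hypersurface $\ell_{i}\bigl(\partial h/\partial x_{0},\dots,\partial h/\partial x_{n}\bigr)=0$ of degree $d-1$ in $\mathbb{P}^{n}$. Since the common zero locus of all partials---the singular set of $C$---is finite by hypothesis, $n$ generic linear combinations continue to have finite common zero set, so the intersection is proper and Bezout yields total intersection degree $(d-1)^{n}$. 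Off the base locus the fibre $\mathrm{grad}(h)^{-1}(q)$ consists of $\pd(C)$ reduced points by genericity of $q$, giving
\[
(d-1)^{n} \;=\; \pd(C) \;+\; \sum_{p\in\mathrm{Sing}(C)} e_{p},
\]
where $e_{p}$ denotes the local intersection multiplicity of the $n$ generic combinations at $p$.

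The remaining task is to identify each $e_{p}$ with $\mu_{p}(h)$. A standard Koszul/genericity argument shows that the length of $\OO_{p}/(\ell_{1}(\nabla h),\dots,\ell_{n}(\nabla h))$ coincides with the length of $\OO_{p}/(\partial h/\partial x_{0},\dots,\partial h/\partial x_{n})$. Euler's identity $d\,h=\sum_{j}x_{j}\,\partial h/\partial x_{j}$ then shows that on $V(h)$, after dehomogenizing in an affine chart around $p$, the partial $\partial h/\partial x_{0}$ lies in the ideal generated by the remaining affine partials; hence this length equals $\dim_{k}\OO_{p}/J_{h}=\mu_{p}(h)$. Substituting yields the desired identity $\pd(C)=(d-1)^{n}-\sum_{p}\mu_{p}(h)$.

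The main obstacle I anticipate is making the genericity step fully rigorous: one must verify that replacing the $n+1$ partial derivatives by $n$ generic linear combinations preserves the local scheme length at each isolated singular point, which rests on the partials (modulo Euler's relation) forming a regular sequence in $\OO_{p}$. Apart from this local-algebraic check, the argument is a clean application of Bezout combined with the Euler identity.
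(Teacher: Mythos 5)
The global Bezout set-up in your proposal is sound, but the local step---identifying the contribution $e_{p}$ at a singular point with $\mu_{p}(h)$---contains a genuine error, and it is exactly the step that makes this formula (the Dolgachev conjecture, proved by Dimca--Papadima) nontrivial. Write $f$ for the dehomogenization of $h$ at $p$ and $J_{f}=(\partial f/\partial x_{1},\dots,\partial f/\partial x_{n})\subset\mathcal{O}_{p}$. Euler's identity gives $\partial h/\partial x_{0}=d\,f-\sum_{j\ge 1}x_{j}\,\partial f/\partial x_{j}$ after setting $x_{0}=1$, so the ideal generated by all $n+1$ partials in $\mathcal{O}_{p}$ is the \emph{Tjurina} ideal $(f)+J_{f}$, not the Jacobian ideal $J_{f}$: the element $\partial h/\partial x_{0}$ lies in $J_{f}$ only modulo $f$, and $f\notin J_{f}$ unless the singularity is quasi-homogeneous (Saito). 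Hence $\dim_{k}\mathcal{O}_{p}/(\partial h/\partial x_{0},\dots,\partial h/\partial x_{n})$ is the Tjurina number $\tau_{p}$, which is strictly smaller than $\mu_{p}$ in general. Your other identification is also false: the colength of the ideal generated by $n$ generic linear combinations of generators of an $\mathfrak{m}$-primary ideal $I$ in the $n$-dimensional local ring $\mathcal{O}_{p}$ is the Hilbert--Samuel multiplicity $e(I)$ (the generic combinations generate a minimal reduction), and $e(I)\ge\dim_{k}\mathcal{O}_{p}/I$ with equality only when $I$ is a parameter ideal. So the two lengths you equate differ precisely in the non-quasi-homogeneous case, and carried through consistently your argument would output $\sum\tau_{p}$ rather than $\sum\mu_{p}$.

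The correct local computation is: $e_{p}=e\bigl((f)+J_{f}\bigr)=e(J_{f})=\dim_{k}\mathcal{O}_{p}/J_{f}=\mu_{p}(h)$. The middle equality is the real content and requires the fact that $f$ is integral over its Jacobian ideal, $f\in\overline{J_{f}}$ (provable by the valuative criterion via the chain rule, or by Brian\c{c}on--Skoda type results), combined with Rees's theorem that ideals with the same integral closure have the same multiplicity; the last equality holds because $J_{f}$ is a parameter ideal in the Cohen--Macaulay ring $\mathcal{O}_{p}$. Note the paper does not prove this lemma at all---it cites it from Fassarella--Medeiros---so you are not deviating from a given argument, but your proof as written fails for any curve with a non-quasi-homogeneous singularity (e.g.\ most unibranch singularities of high multiplicity), and the missing ingredient $f\in\overline{J_{f}}$ cannot be supplied by a ``standard Koszul/genericity argument.''
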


By Lemma \ref{lem:Milnor formula} and Proposition \ref{prop:general}, the Milnor sum of a plane curve is bounded above by $(d-1)^{2}-\lfloor \frac{d}{2} \rfloor$ unless it is concurrent lines. Therefore, as in the case of the Milnor number of a plane curve, the Milnor sum of a plane curve also has the same bound and the equality holds only when the curve is exactly the same case as in \cite[Theorem 1.4]{AP}.  

Finally, by using Hilbert-Mumford criterion (Theorem \ref{thm:Hilbert-Mumford}), we prove that even P\l oski curves are strictly semi-stable and odd P\l oski curves are unstable (See Proposition \ref{prop:criterion}). By the previous theorem, we expect that the polar degree can be reduced by GIT conditions. Since there are many irreducible, stable plane curves of degree $d$ with polar degree $d-1$, a bound for the Milnor sum should be less than or equal to $(d-1)^{2}-(d-1)$. However, the following theorem which is one of our main results says that for some cases, this bound is very close. 

\begin{thm} 
Let $\deg C=d \ge 5$. Then, we have the followings:
\begin{enumerate}[1)]
\item Suppose $C$ is a stable curve that has either a line or a conic as an irreducible component. Then $\sum \mu_{p} \le (d-1)^{2}-(d-2)$.
\item Let $d$ be odd. Suppose $C$ is a semi-stable curve that has either a line or a conic as an irreducible component. Then $\sum \mu_{p} \le (d-1)^{2}-(d-2)$.
\item Suppose all irreducible components of $C$ are of $\deg \ge 3$. Then $\sum \mu_{p} \le (d-1)^{2}-\lceil{\frac{2d}{3}}\rceil$. 
\end{enumerate}
\end{thm}

In Section 2, we recall Hilbert-Mumford criterion (See Theorem \ref{thm:Hilbert-Mumford}) and its application to projective plane curves. Moreover, some definitions and well-known results are mentioned. Finally, in the last section, we will prove main theorems of this paper.

\section{GIT criterion and polar degree of plane curves}

The purpose of this section is to introduce some preliminaries that are useful to prove the main theorem. From now on, we denote the polar degree of a plane curve $C$ by $\pd(C)$. First, recall that the definition of semi-stability and stability in \cite[Chapter 8]{ID}.
Let $T=G_{m}^{r}$ be a torus and let $V$ be a vector space. Then, a linear representation of $T$ splits $V$ into the direct sum of eigenspaces $V=\oplus_{\chi \in \chi(T)} V_{\chi}$, where $\chi(T)$ is a set of rational characters of $T$ and $V_{\chi} = \{v \in V : t \cdot v = \chi(t) \cdot v\}$.  Since there is a natural identification between $\chi(T)$ and $\mathbb{Z}^{r}$ of abelian groups, by identifying them, we define the weight set of $V$ by $wt(V) = \{ \chi \in \chi(T) : V_{\chi} \neq \{0\}\} \subset \mathbb{Z}^{r}$. In particular, let $\overline{wt(V)}$ = convex hull of $wt(V)$ in $\chi(T) \otimes \mathbb{R} \cong \mathbb{R}^{r}$. (See \cite[Chapter9]{ID})

\begin{thm} \cite[Theorem 9.2]{ID}(Hilbert-Mumford criterion) \label{thm:Hilbert-Mumford}
Let $G$ be a torus and let $L$ be an ample $G$-linearlized line bundle on a projective $G$-variety $X$. Then, 
\begin{enumerate}[1)]
\item $x$ is semi-stable if and only if $0 \in \overline{wt(x)}$. 
\item $x$ is stable if and only if $0 \in interior(\overline{wt(x)})$. 
\end{enumerate}
\end{thm}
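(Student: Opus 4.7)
By the Milnor formula (Lemma~\ref{lem:Milnor formula}), each of the three inequalities is equivalent to a lower bound on the polar degree: $\pd(C) \ge d-2$ in parts~(1) and~(2), and $\pd(C) \ge \lceil 2d/3 \rceil$ in part~(3). My plan is to derive these via a decomposition formula for $\pd$ on reducible curves, combined with the preceding theorem characterizing P\l oski curves as those of minimal polar degree and with the Hilbert-Mumford criterion (Theorem~\ref{thm:Hilbert-Mumford}).

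\textbf{Polar-degree decomposition.} The central tool is the identity
\[
\pd(f_1 f_2) \;=\; \pd(f_1) + \pd(f_2) + |V(f_1) \cap V(f_2)| - 1,
\]
valid whenever $f_1, f_2$ share no common factor and both define curves with isolated singularities. I would establish it from the pointwise Milnor identity
\[
\mu_p(f_1 f_2) \;=\; \mu_p(f_1) + \mu_p(f_2) + 2\, i_p(f_1, f_2) - 1,
\]
summed against B\'ezout's theorem and inserted into Lemma~\ref{lem:Milnor formula}. Iteration then handles any reducible $C = C_1 \cup \cdots \cup C_k$.

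\textbf{Parts (1) and (2).} Write $C = D \cup C'$ where $D$ is the distinguished line or conic, so
\[
\pd(C) \;=\; \pd(D) + \pd(C') + N - 1, \qquad N = |D \cap C'|,
\]
with $\pd(D) \in \{0, 1\}$. When $N$ is close to its B\'ezout value, $\pd(C) \ge d-2$ is immediate. Otherwise $N$ is small, forcing $D$ to meet $C'$ with high contact at few points. I would rule out this configuration by Theorem~\ref{thm:Hilbert-Mumford} applied to a one-parameter subgroup of $SL_3$ fixing $D$ pointwise (or fixing a flag through the tangency point): in the small-$N$ scenario every monomial of $f$ has non-negative weight, so the weight polytope of $f$ lies in the closed positive half-space, contradicting stability in part~(1). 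Part~(2) uses the same argument augmented by Proposition~\ref{prop:criterion} to resolve the boundary case of semi-stability in odd degree, where odd P\l oski curves are already known to be unstable.

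\textbf{Part (3) and main obstacle.} When every irreducible component $C_i$ has $d_i := \deg C_i \ge 3$, the number of components satisfies $k \le \lfloor d/3 \rfloor$. For each irreducible $C_i$, the classical arithmetic-genus bound $\sum \mu_p(C_i) \le (d_i - 1)(d_i - 2)$ yields $\pd(C_i) \ge d_i - 1$. Inserting this and $N_{ij} \ge 1$ into the iterated decomposition gives
\[
\pd(C) \;\ge\; (d - k) + \binom{k}{2} - (k - 1) \;=\; d - 2k + 1 + \tfrac{k(k-1)}{2},
\]
and an elementary check over feasible $k$ (the right-hand side is minimized at $k = 2, 3$) verifies this is $\ge \lceil 2d/3 \rceil$. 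The decisive difficulty of the proof is the Hilbert-Mumford step in parts~(1) and~(2): for each extremal tangency pattern of $D$ and $C'$ one must construct a destabilizing 1-PS and match its weights against the Newton polygon of $f$, with the parity analysis behind Proposition~\ref{prop:criterion} handling the odd-degree semi-stable case on the boundary.
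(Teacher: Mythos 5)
Your proposal does not address the statement at all. The statement to be proved is the Hilbert--Mumford criterion for torus actions: for a torus $G$ acting on a projective $G$-variety $X$ with an ample $G$-linearized line bundle $L$, a point $x$ is semi-stable if and only if $0 \in \overline{wt(x)}$, and stable if and only if $0$ lies in the interior of $\overline{wt(x)}$. This is a foundational result of geometric invariant theory, quoted in the paper from Dolgachev's \emph{Lectures on Invariant Theory} (Theorem 9.2 there) and not reproved; a proof would have to relate the existence of nonvanishing invariant sections (equivalently, the behavior of the orbit closure of a lift of $x$ to the affine cone) to the position of the origin relative to the convex hull of the characters occurring in the decomposition of that lift into eigenvectors. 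Nothing in your write-up touches weights, convex hulls, linearizations, or invariant sections.

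What you have written instead is a proof sketch for the paper's final theorem on Milnor sums of (semi-)stable plane curves: you reduce via the Milnor formula to lower bounds on the polar degree, decompose $\pd$ over irreducible components, and invoke the Hilbert--Mumford criterion as a \emph{tool}. That is the structure of the paper's Section 3, not a proof of Theorem \ref{thm:Hilbert-Mumford} itself --- indeed, your argument \emph{uses} Theorem \ref{thm:Hilbert-Mumford} as an input, so it cannot serve as a proof of it without circularity. You need to start over with the correct target: either give (or carefully cite) the standard GIT argument for the torus case of the Hilbert--Mumford criterion, or, if the intent was to prove the Milnor-sum theorem, flag that you are answering a different question.
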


Also, we can check immediately that a given projective plane curve of degree $d$ is unstable by using the following proposition. 

\begin{prop} \cite[Chapter 10]{ID} \label{prop:unstability}
A projective plane curve of degree $d$ is unstable if it has a singular point of multiplicity $> \frac{2d}{3}$.
\end{prop}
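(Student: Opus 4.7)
The plan is to invoke Theorem \ref{thm:Hilbert-Mumford} by producing a one-parameter subgroup of $SL(3)$ that separates $0$ from the weights of $f$. First, since (semi)stability is $SL(3)$-invariant, I use the $SL(3)$-action on $\mathbb{P}(H^{0}(\mathbb{P}^{2}, \mathcal{O}(d)))$ to translate the hypothesized singular point of multiplicity $m > 2d/3$ to the coordinate vertex $p = [0:0:1]$. In the affine chart $z = 1$, the multiplicity of $C$ at $p$ equals the order of vanishing of $f(x,y,1)$ at the origin, so every monomial $x^{i} y^{j} z^{k}$ (with $i+j+k=d$) appearing in the defining equation $f$ must satisfy $i + j \geq m$.

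Inside the diagonal torus of $SL(3)$, I select the 1-PS
\[ \lambda(t) = \mathrm{diag}\bigl(t,\, t,\, t^{-2}\bigr), \]
which lies in $SL(3)$ since its determinant is $1$. The associated cocharacter pairs with a weight $(i,j,k)$ to give $i + j - 2k$, which equals $3(i+j) - 2d$ after using $i+j+k = d$. By the multiplicity bound every weight appearing in $wt([f])$ satisfies
\[ 3(i+j) - 2d \ \geq\ 3m - 2d \ >\ 0, \]
so the entire convex hull $\overline{wt([f])}$ lies strictly in the open half-space where $\lambda$ is positive. In particular $0 \notin \overline{wt([f])}$.

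Applying Theorem \ref{thm:Hilbert-Mumford}(1) to the $1$-dimensional subtorus generated by $\lambda$ then shows that $[f]$ is not semistable for this subtorus, and hence is not semistable under the ambient $SL(3)$-action (instability under a subgroup implies instability under the ambient reductive group, since orbit closures enlarge). Therefore $C$ is unstable. There is no real obstacle in this argument: the entire content is the choice of $\lambda$, where the exponent $-2$ in the third coordinate (forced by the determinant-one constraint) together with the relation $k = d - i - j$ is exactly what converts the hypothesis $m > 2d/3$ into strict positivity of all weights.
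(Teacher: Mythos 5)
Your argument is correct, and it is exactly the standard destabilizing one-parameter-subgroup computation that underlies the cited reference: the paper itself quotes this proposition from \cite[Chapter 10]{ID} without proof, and Dolgachev's proof is precisely the observation that after moving the point of multiplicity $m>\frac{2d}{3}$ to $[0:0:1]$ every surviving monomial has $i+j\ge m$, so the cocharacter $\mathrm{diag}(t,t,t^{-2})$ pairs strictly positively with the whole weight set and $0\notin\overline{wt([f])}$. The only point worth stating explicitly is the reduction from the $1$-dimensional subtorus to the full $SL(3)$-action, which you do correctly via the inclusion of orbit closures (equivalently, a $G$-invariant nonvanishing at $[f]$ restricts to an invariant for the subtorus), so there is nothing to fix.
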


Now, we recall a P\l oski curve. 

\begin{defn} \cite[Definition 1.9]{IC}
The curve $C$ is called an even P\l oski curve if $\deg C=2n$, it has $n$ irreducible componenets that are smooth conics passing through $P$, and all irreducible components intersect each other pairwise at $P$ with multiplicity 4.
\end{defn}

\begin{figure}[H]
\centering
\begin{tikzpicture}[scale=0.8]
\draw[smooth, domain=0:6.28] plot ({cos(\x r)},{sin(\x r)});
\draw[smooth, domain=0:6.28] plot ({1.5*cos(\x r)},{0.5+1.5*sin(\x r)});
\draw[smooth, domain=0:6.28] plot ({2*cos(\x r)},{0.98+2*sin(\x r)}) 
(0,2.5) node{$\vdots$} ;
\end{tikzpicture}
\caption{An even P\l oski curve}
\end{figure}
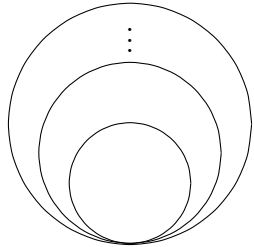

\begin{defn} \cite[Definition 1.10]{IC}
The curve $C$ is called an odd P\l oski curve if $\deg C=2n+1$, it has $n$ irreducible componenets that are smooth conics passing through $P$ and intersect each other pairwise at $P$ with multiplicity 4, and the remaining irreducible component is a line that is tangent at $P$ to all other irreducible components. 
\end{defn}

\begin{figure}[H]
\centering
\begin{tikzpicture}[scale=0.8]
\draw[smooth, domain=0:6.28] plot ({cos(\x r)},{sin(\x r)});
\draw[smooth, domain=0:6.28] plot ({1.5*cos(\x r)},{0.5+1.5*sin(\x r)});
\draw[smooth, domain=0:6.28] plot ({2*cos(\x r)},{0.98+2*sin(\x r)});
\draw[smooth, domain=0:6.28] plot ({\x-3)},{-1.05})
(0,2.5) node{$\vdots$} ;
\end{tikzpicture}
\caption{An odd P\l oski curve}
\end{figure}
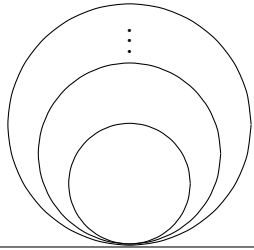

It is hard to compute the Milnor sum of a given projective plane curves directly. However, polar degree is a global one, so we can compute that more easily. So we will find a lower bound for the polar degree and use Lemma \ref{lem:Milnor formula} in order to get an upper bound for the Milnor sum of plane curves. So the problem of computing the Milnor sum of plane curves can be reduced to that of computing the polar degree. However, we can easily get the polar degree of a plane curve by the following two lemmas. 

\begin{lem} \cite[Theorem 3.1]{FM} \label{lem:2.4}
Given an irreducible curve $C \subset \mathbb{P}^{2}$ of degree $d$, we have 
\[
\pd(C) = d-1+2p_{g}+\sum(r_{p}-1),
\]
where $p_{g}$ is the geometric genus and $r_{p}$ is the number of branches at $p$. 
\end{lem}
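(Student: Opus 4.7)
The plan is to derive the formula by combining three well-known identities for an irreducible plane curve of degree $d$: the Milnor formula of Lemma \ref{lem:Milnor formula}; the Milnor--Jung formula $\mu_{p} = 2\delta_{p} - r_{p} + 1$ for isolated plane curve singularities, where $\delta_{p}$ denotes the delta invariant; and the classical genus formula $p_{g} = \binom{d-1}{2} - \sum_{p} \delta_{p}$ for an irreducible plane curve.

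First, I would invoke Lemma \ref{lem:Milnor formula}, which applies because an irreducible plane curve of positive degree has only isolated singularities, to write
\[
\pd(C) = (d-1)^{2} - \sum_{p} \mu_{p},
\]
the sum running over the singular points of $C$ (or equivalently over all points, since $\mu_{p}=0$ and $r_{p}-1=0$ at a smooth point). Substituting Milnor--Jung term by term gives
\[
\sum_{p} \mu_{p} = 2 \sum_{p} \delta_{p} - \sum_{p}(r_{p} - 1),
\]
and hence
\[
\pd(C) = (d-1)^{2} - 2 \sum_{p} \delta_{p} + \sum_{p}(r_{p} - 1).
\]

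Finally, I would eliminate $\sum_{p}\delta_{p}$ via the genus formula in the form $2\sum_{p}\delta_{p} = (d-1)(d-2) - 2p_{g}$, obtaining
\[
\pd(C) = (d-1)^{2} - (d-1)(d-2) + 2 p_{g} + \sum_{p}(r_{p} - 1) = (d-1) + 2 p_{g} + \sum_{p}(r_{p} - 1),
\]
which is the claimed identity.

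The argument is essentially bookkeeping once the three input identities are at hand, so there is no real obstacle beyond noticing that the Milnor--Jung formula is exactly the device that translates between the \emph{analytic} quantity $\mu_{p}$ appearing in Lemma \ref{lem:Milnor formula} and the \emph{birational} data $(p_{g}, r_{p})$ appearing on the right-hand side. The irreducibility hypothesis enters only through the genus formula: for a reducible curve, one would need to account for the pairwise intersections of distinct components via the arithmetic genus of the union, and the clean cancellation $(d-1)^{2}-(d-1)(d-2)=d-1$ would have to be revisited.
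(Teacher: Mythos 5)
The paper does not prove this lemma at all---it is quoted verbatim from Fassarella--Medeiros \cite[Theorem 3.1]{FM}---so there is no in-paper argument to compare against. Your derivation is correct and is the standard route to the statement: combining the Milnor formula $\pd(C)=(d-1)^{2}-\sum_{p}\mu_{p}$ with the Milnor--Jung relation $\mu_{p}=2\delta_{p}-r_{p}+1$ (valid for reduced, possibly multibranch, plane curve germs, which is what one needs even on an irreducible curve) and the genus formula $p_{g}=\binom{d-1}{2}-\sum_{p}\delta_{p}$ gives exactly $d-1+2p_{g}+\sum_{p}(r_{p}-1)$, and your closing remark correctly identifies where irreducibility is used.
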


\begin{lem} \cite[Theorem 3.1]{FM} \label{lem:polar degree formula}
Given two reduced curves $C, D$ in $\mathbb{P}^{2}$ with no common components, we have 
\[
\pd(C \cup D) = \pd(C) + \pd(D) + \sharp(C \cap D) - 1
\]
\end{lem}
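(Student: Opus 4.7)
By the Milnor formula (Lemma \ref{lem:Milnor formula}), each bound $\sum \mu_p \le (d-1)^2 - B$ is equivalent to the polar-degree lower bound $\pd(C) \ge B$. The plan is therefore to prove polar-degree lower bounds using the additivity formula of Lemma \ref{lem:polar degree formula}, the irreducible-curve bound of Lemma \ref{lem:2.4}, and, for parts 1) and 2), the multiplicity constraint $\le 2d/3$ imposed on (semi-)stable curves by Proposition \ref{prop:unstability}.

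\textbf{Part 3).} I prove $\pd(C) \ge \lceil 2d/3 \rceil$ by induction on the number $k$ of irreducible components of $C$. The base $k = 1$ follows at once from Lemma \ref{lem:2.4}, which gives $\pd(C) \ge d - 1 \ge 2d/3$ (since $d \ge 3$). For the inductive step, split off one component $C_1$ of degree $d_1 \ge 3$ and write $C = C_1 \cup C'$. Combining Lemma \ref{lem:polar degree formula}, $\pd(C_1) \ge d_1 - 1$, the inductive hypothesis $\pd(C') \ge 2(d - d_1)/3$, and $\#(C_1 \cap C') \ge 1$, one obtains $\pd(C) \ge (d_1 - 1) + 2(d - d_1)/3 \ge 2d/3$; the crucial inequality $d_1 - 1 \ge 2d_1/3$ holds precisely because $d_1 \ge 3$. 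Rounding up gives the claim.

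\textbf{Parts 1) and 2).} Write $C = L \cup C'$ with $L$ the prescribed line or irreducible conic (if the conic is reducible, absorb one of its lines into $C'$ and run the line-component version of the argument). Since $\pd(L) \in \{0, 1\}$, Lemma \ref{lem:polar degree formula} reduces the target $\pd(C) \ge d - 2$ to the inequality $\pd(C') + \#(L \cap C') \ge d - 1 - \pd(L)$. If $C'$ is irreducible, Lemma \ref{lem:2.4} immediately gives this. If $C' = D_1 \cup \cdots \cup D_k$ is reducible, iterate Lemma \ref{lem:polar degree formula} to obtain
\[
\pd(C) = \pd(L) + \sum_{j=1}^{k} \pd(D_j) + \sum_{j=1}^{k} I_j - k, \qquad I_j = \#\bigl(D_j \cap (L \cup D_1 \cup \cdots \cup D_{j-1})\bigr).
\]
The cheap bounds $\pd(D_j) \ge d_j - 1$ and $I_j \ge 1$ only yield $\pd(C) \ge \pd(L) + (d - 1) - k$, which is too weak for $k \ge 2$. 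Hence the remaining slack must come either from some $\pd(D_j) > d_j - 1$ (a non-rational or multibranch component) or from some $I_j \ge 2$. I would then analyze the extremal case in which every $D_j$ attains $\pd(D_j) = d_j - 1$ and every $I_j = 1$; this forces all irreducible components of $C$ to meet at one common point $P$, with pairwise local contact of maximum possible order at $P$. Bezout's theorem sharply restricts such configurations, and the surviving ones turn out to be exactly the P\l oski curves of the relevant degree. Proposition \ref{prop:criterion} then closes both parts: stability (part 1)) forbids both parities of P\l oski, while semi-stability with $d$ odd (part 2)) forbids odd P\l oski and even P\l oski cannot occur at odd degree. Any non-extremal configuration still violating the target would force the local multiplicity at $P$ to exceed $2d/3$, and is therefore excluded by Proposition \ref{prop:unstability}.

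\textbf{Main obstacle.} The most delicate step is the extremal analysis of the reducible sub-case in parts 1) and 2): one must rigorously show, using Bezout's constraints on local contact orders together with multiplicity accounting at the common point $P$, that the only polar-degree-minimizing configurations are precisely the P\l oski curves. Once this classification is in hand, Propositions \ref{prop:criterion} and \ref{prop:unstability} finish the job; the combinatorial bookkeeping required to push this through for arbitrary $k$ is where the real work lies.
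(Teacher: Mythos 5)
Your write-up does not address the statement you were asked to prove. The statement is the additivity formula $\pd(C \cup D) = \pd(C) + \pd(D) + \sharp(C \cap D) - 1$ for two reduced plane curves with no common components. What you have written instead is a strategy for the paper's concluding theorem (the three GIT-conditioned bounds on $\sum \mu_p$), and throughout that strategy you freely invoke Lemma \ref{lem:polar degree formula} as a known tool --- that is, you assume exactly the identity you were supposed to establish. As a proof of the lemma this is circular and vacuous; as an argument it concerns a different result altogether. (For the record, the paper does not prove this lemma internally either: it is imported verbatim from \cite[Theorem 3.1]{FM}, so the expected content here is either that citation or an actual derivation.)

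If you do want to derive the lemma rather than cite it, a self-contained route goes through Lemma \ref{lem:Milnor formula}: write $C = V(f)$, $D = V(g)$ of degrees $d_1, d_2$, apply $\pd = (d-1)^{2} - \sum_p \mu_p$ to $C$, $D$ and $C \cup D = V(fg)$, and use the local formula $\mu_p(fg) = \mu_p(f) + \mu_p(g) + 2\,(C \cdot D)_p - 1$ at each $p \in C \cap D$ together with Bezout, $\sum_{p}(C \cdot D)_p = d_1 d_2$. The algebraic identity $(d_1 + d_2 - 1)^{2} - 2 d_1 d_2 = (d_1 - 1)^{2} + (d_2 - 1)^{2} - 1$ then yields exactly $\pd(C \cup D) = \pd(C) + \pd(D) + \sharp(C \cap D) - 1$. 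None of this appears in your submission, so the claimed formula remains unproved.
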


The following lemma is the result of P\l oski (See \cite[Theorem 1.4]{AP}) that makes a P\l oski curve important. 
 
\begin{lem} \cite[Theorem 1.4]{AP} \label{lem:Ploski}
If $C=V(h)$ is a plane curve of degree $d \ge 5$, then $\mu_{p}(h) = (d-1)^{2} - \lfloor \frac{d}{2} \rfloor$ if and only if $C$ is a P\l oski curve and $p$ is a singular point. \end{lem}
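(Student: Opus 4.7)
The plan is to verify both directions of the equivalence separately, with the forward direction being a direct computation and the backward direction being the extremal characterization. For the forward direction, I would apply the Milnor-style formula $\mu_p(h) = 2\delta_p(h) - r + 1$, where $r$ is the number of branches at $p$ and $\delta_p(h) = \sum_i \delta_p(h_i) + \sum_{i<j} I_p(h_i, h_j)$ under a branch decomposition $h = h_1 \cdots h_r$. Every irreducible component of a P\l oski curve is smooth at $p$ (a smooth conic or a line), so each $\delta_p(h_i)$ vanishes and $\delta_p$ reduces to a sum of pairwise intersection multiplicities at $p$. For an even P\l oski curve with $d = 2n$ and $n$ conics pairwise meeting with $I_p = 4$, this yields $\delta_p = 4\binom{n}{2}$ and $\mu_p = 4n^2 - 5n + 1 = (2n-1)^2 - n$. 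For an odd P\l oski curve with $d = 2n+1$ and an additional tangent line contributing $I_p = 2$ to each conic, one gets $\delta_p = 4\binom{n}{2} + 2n$ and $\mu_p = 4n^2 - n = (2n)^2 - n$. In both cases the answer matches $(d-1)^{2} - \lfloor \frac{d}{2} \rfloor$.

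For the backward direction, the strategy is to show that a singularity achieving the maximal Milnor number forces extremely rigid branch and tangent-cone data. First, I would combine Bezout-type upper bounds for $\mu_p$ in terms of the multiplicity $m_p$ of $C$ at $p$ with the equality hypothesis to force $m_p$ to be essentially as large as $d$, so that the whole curve concentrates its singular behavior at $p$. Next, decomposing $h = h_1 \cdots h_r$ into irreducible branches at $p$ and re-applying $\mu_p = 2\delta_p - r + 1$, the equality condition together with upper bounds on each $\delta_p(h_i)$ in terms of $\deg h_i$ should force every $h_i$ to be smooth at $p$ and of degree one or two, while forcing all pairwise contact orders $I_p(h_i,h_j)$ to attain their geometric maxima. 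The parity of $d$ then selects between the all-conic configuration and the conic-plus-tangent-line configuration, recovering the even and odd P\l oski curves respectively.

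The principal obstacle is the backward implication, and specifically the need to rule out near-extremal configurations such as a cuspidal or higher-order branch paired with several conics, or a smooth quartic with unusual contact data, which could conceivably approach but not attain the bound $(d-1)^{2} - \lfloor \frac{d}{2} \rfloor$. Excluding such configurations requires a careful analysis of the semigroup of values of each branch and of the possible distributions of contact orders among branches at $p$, which is the technical heart of P\l oski's argument in \cite{AP}; my outline here is schematic, and a complete proof would essentially reproduce that analysis.
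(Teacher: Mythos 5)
The paper does not prove this lemma at all: it is imported verbatim as Theorem 1.4 of P\l oski's paper \cite{AP} (with the equality case as in \cite{IC}), so there is no internal argument to compare your proposal against. Your forward direction is correct and complete as a computation: every branch of a P\l oski curve is smooth at $p$, so $\delta_p$ reduces to the sum of pairwise local intersection numbers, and $\mu_p = 2\delta_p - r + 1$ gives $4n^2-5n+1=(2n-1)^2-n$ in the even case ($r=n$, $\delta_p=4\binom{n}{2}$) and $4n^2-n=(2n)^2-n$ in the odd case ($r=n+1$, $\delta_p=4\binom{n}{2}+2n$), matching $(d-1)^2-\lfloor d/2\rfloor$ in both parities.

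The backward direction is a genuine gap, and you concede as much. What you offer is a plan --- force the multiplicity at $p$ to be large, force each branch to be smooth of degree at most two, force all contact orders to be maximal --- but none of these forcing steps is carried out, and the exclusion of near-extremal configurations (a higher-order or singular branch, a smooth cubic or quartic with high contact, non-uniform contact orders among conics) is precisely the technical content of P\l oski's theorem. One concrete misstep in the sketch: the equality case does not push $m_p$ to be ``essentially as large as $d$.'' For the extremal P\l oski configurations every branch is smooth, so $m_p$ equals the number of branches, namely $\lceil d/2\rceil$; configurations with $m_p$ close to $d$ are far from extremal (indeed Proposition \ref{prop:unstability} makes such curves unstable, and the case $m_p=d$ is the excluded cone of concurrent lines). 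So the first step of your strategy points in the wrong direction. As written, the ``only if'' half is not a proof but a deferral to \cite{AP}, which is exactly what the paper itself does by citing the result.
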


In order to check the semi-stability of a given plane curve, we need to consider the weight set of that one. The following remark gives a way to compute the weight set for plane curves. 

\medskip

\begin{rem} \cite[Chapter 10]{ID} ($wt$ for plane curves) 
Let $Pol_{d}(E)$ be the space of degree $d$ homogeneous polynomial on $E$, where $E$ is a finite dimensional vector space. Let the standard torus $G_{m}^{2}$ act on $V=Pol_{d}(k^{3})$ via its natural homomorphism $G_{m}^{2} \rightarrow SL_{3}, (t_{1}, t_{2}) \mapsto (a_{ij})_{1 \le i,j \le 3}$, where $a_{11}=t_{1}, a_{22}=t_{2}, a_{33}=t_{1}^{-1}t_{2}^{-1}, a_{ij}=0$ for all $i \neq j$, i.e. $(t_{1}, t_{2}) \cdot x^{i}y^{j}z^{k} = t_{1}^{i-k}t_{2}^{j-k}x^{i}y^{j}z^{k}$, $i+j+k=d$. Let $V(h)$ be a plane curve of degree $d$, i.e. $i+j+k=d$, i.e. $(i-k, j-k)=(2i+j-d, 2j+i-d)$. So $wt = \{(2i+j-d, 2j+i-d) \in \mathbb{Z}^{2} : i, j \ge 0, i+j \le d, a_{ijk} \neq 0\}$. Moreover, by considering $\mathbb{R} \otimes \mathbb{Z}^{2}$, define $\bar{wt}$ by the closure of $wt$ in $\mathbb{R}^{2}$. 
\end{rem}

\section{Main result}

Now, we are ready to prove our main theorems of this paper. For notational convenience, let $r_{p}$ be the number of branches at $p$ as in Lemma \ref{lem:polar degree formula}.

\begin{lem} \label{lem:first} P\l oski curves are of polar degree $\lfloor \frac{d}{2} \rfloor$, where $\deg = d$. 
\end{lem}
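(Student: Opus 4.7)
The plan is to apply Lemmas \ref{lem:2.4} and \ref{lem:polar degree formula} directly to decompose the Płoski curve into its irreducible components and iterate. First I would compute the polar degree of each building block. A smooth conic $Q$ is irreducible of degree $2$ with $p_g = 0$ and $r_p = 1$ everywhere, so Lemma \ref{lem:2.4} gives $\pd(Q) = 2 - 1 = 1$. A line $L$ similarly satisfies $\pd(L) = 0$.

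Next I would pin down the set-theoretic intersection cardinalities via Bezout. For two distinct smooth conics $Q_i, Q_j$ meeting at $P$ with intersection multiplicity $4$: since $\deg Q_i \cdot \deg Q_j = 4$, Bezout forces $P$ to be their unique intersection point, so $\#(Q_i \cap Q_j) = 1$. For a line $L$ tangent to a smooth conic $Q$ at $P$: the local intersection multiplicity at $P$ is at least $2$, which already matches the Bezout total $\deg L \cdot \deg Q = 2$, so $\#(L \cap Q) = 1$.

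Then I would run induction on the number of conics. Write $C_k = Q_1 \cup \cdots \cup Q_k$. All $Q_i$ are distinct irreducible curves, so Lemma \ref{lem:polar degree formula} applies at each step. Since
\[
C_k \cap Q_{k+1} \;=\; \bigcup_{i=1}^{k}(Q_i \cap Q_{k+1}) \;=\; \{P\},
\]
one obtains $\pd(C_{k+1}) = \pd(C_k) + 1 + 1 - 1 = \pd(C_k) + 1$. Starting from $\pd(C_1) = 1$, this yields $\pd(C_n) = n$, which equals $\lfloor d/2 \rfloor$ when $d = 2n$. This settles the even case. For the odd case ($d = 2n+1$), the curve is $C_n \cup L$ where $L$ is the line tangent at $P$ to every $Q_i$; then $C_n \cap L = \{P\}$, so Lemma \ref{lem:polar degree formula} gives
\[
\pd(C_n \cup L) \;=\; n + 0 + 1 - 1 \;=\; n \;=\; \lfloor d/2 \rfloor.
\]

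There is no real obstacle here: the argument is essentially a bookkeeping exercise, and the only thing that requires care is verifying, via Bezout combined with the prescribed local intersection multiplicities ($4$ for conic-conic, $2$ for the tangent line with each conic), that every pairwise intersection of components collapses to the single point $P$. Once that is observed, each application of Lemma \ref{lem:polar degree formula} contributes exactly $1$ to the polar degree, yielding the claimed value.
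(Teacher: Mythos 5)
Your proof is correct and follows essentially the same route as the paper: compute $\pd=1$ for each conic and $\pd=0$ for the line via Lemma~\ref{lem:2.4}, observe via B\'ezout that every pairwise intersection collapses to the single point $P$, and iterate Lemma~\ref{lem:polar degree formula} to get $\pd(C)=n=\lfloor d/2\rfloor$. The only difference is that you spell out the B\'ezout verification of $\#(C_k\cap Q_{k+1})=1$, which the paper leaves implicit.
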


\begin{proof}
First, we consider an even P\l oski curve, i.e. $d=2n$. Let $C=C_{1} \cdots C_{n}$ be an even P\l oski curve, where $C_{i}$'s are conics. Then, $\pd(C) = \pd(C_{1}) + \cdots + \pd(C_{n}) + \sharp(C_{1} \cap C_{2}) + \cdots + \sharp((C_{1} \cdots C_{n-1} \cap C_{n}) - (n-1) = n$. Next, we consider an odd P\l oski curve, i.e. $d=2n+1$. Let $C=lC_{1} \cdots C_{n}$, where $l$ is a tangent line, $C_{i}$'s are conics. Then, $\pd(C) = \pd(l) + \pd(C_{1} \cdots C_{n}) + \sharp(l \cap C_{1} \cdots C_{n}) -1 = n$. 
\end{proof}

\begin{lem} \label{lem:pre}
Let $C= C_{1} \cdots C_{m}C_{m+1} \cdots C_{k}$ be a plane curve of degree $2n$ (respectively, $2n+1$) with $m \ge 1$, $k>n$ (respectively, $k>n+1$), where $C_{1}, \cdots ,C_{m}$ are irreducible, singular plane curves and $C_{m+1} \cdots C_{k}$ is concurrent lines. Then, $\pd(C) \ge n$. 
\end{lem}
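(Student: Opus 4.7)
The plan is to decompose $C = E \cup D$, where $E = C_1 \cup \cdots \cup C_m$ collects the irreducible singular components and $D = C_{m+1} \cup \cdots \cup C_k$ is the union of concurrent lines through a common point $P$. Since $D$ is concurrent lines, $\pd(D) = 0$ just as in Lemma~\ref{lem:first}, so Lemma~\ref{lem:polar degree formula} yields
\[
\pd(C) = \pd(E) + \sharp(E \cap D) - 1.
\]
Iterating Lemma~\ref{lem:polar degree formula} on the components of $E$ and using $\pd(C_i) \geq d_i - 1$ from Lemma~\ref{lem:2.4} together with $\sharp(C_i \cap C_j) \geq 1$ from B\'ezout, one obtains $\pd(E) \geq d_E - m$, where $d_E := \sum_{i=1}^m d_i$. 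Because each $C_i$ is irreducible and singular, $d_i \geq 3$ and hence $d_E \geq 3m$; combined with $d_E + (k - m) = d$ and the hypothesis on $k$, this yields the standing bound $m \leq (n - 1)/2$.

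Next I would estimate $\sharp(E \cap D)$ from below, splitting on which $C_i$ pass through $P$. If $P \notin E$, each line $L_j$ meets $E$ in at least one off-$P$ point by B\'ezout, and different lines have disjoint off-$P$ intersections since two distinct lines of $D$ meet only at $P$; this gives $\sharp(E \cap D) \geq k - m$ and $\pd(C) \geq d - m - 1 \geq n$. If $P \in E$ but some $C_i$ does not pass through $P$, the union of those $C_i$ not containing $P$ is non-empty and contributes an off-$P$ intersection on each line by B\'ezout; together with $P$ itself this gives $\sharp(E \cap D) \geq 1 + (k - m)$ and $\pd(C) \geq d - m \geq n$.

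The main obstacle is the remaining subcase in which every $C_i$ passes through $P$. Here a line $L_j$ may satisfy $L_j \cap E = \{P\}$ set-theoretically, equivalently $I_P(L_j, C_i) = d_i$ for every $i$. Since $d_i > m_P(C_i)$ for an irreducible curve of degree $\geq 2$, such a line must lie in the tangent cone of each $C_i$ at $P$, and the tangent cone contains at most $m_P(C_i) \leq d_i - 1$ distinct lines. Setting $d_{\min} := \min_i d_i$, the number $S$ of such ``trapped'' lines therefore satisfies $S \leq d_{\min} - 1$, so $\sharp(E \cap D) \geq 1 + (k - m - S) \geq k - m - d_{\min} + 2$ and
\[
\pd(C) \geq d - m - d_{\min} + 1.
\]
The arithmetic mean inequality $d_{\min} \leq d_E / m$ then reduces $\pd(C) \geq n$ to the elementary estimate $m(n - m) \geq n - 1$ for $d = 2n$ (and $m(n + 1 - m) \geq n - 1$ for $d = 2n + 1$), both of which follow from $m \leq (n - 1)/2$.
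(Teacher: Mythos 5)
Your argument is correct, and in the one step where the paper's own proof is terse it is actually more explicit. Both proofs share the same skeleton: split $C=E\cup D$ with $E=C_{1}\cdots C_{m}$ and $D$ the concurrent lines through $P$, use $\pd(D)=0$ and Lemma~\ref{lem:polar degree formula} to reduce to $\pd(E)\ge d_{E}-m$ plus a lower bound on $\sharp(E\cap D)$, and use $\deg C_{i}\ge 3$ together with $k>n$ to force $m\le (n-1)/2$. Where you diverge is in handling the lines $L_{j}$ with $L_{j}\cap E=\{P\}$: the paper absorbs them into the branch term $\sum_{p}(r_{p}-1)$ of Lemma~\ref{lem:2.4}, asserting in one line that $\sum_{p}(r_{p}-1)+\sharp(E\cap D)\ge k-m$ (implicitly, such a line must be tangent to a branch of each $C_{i}$ at $P$, so there are at most $r_{P}(C_{i})$ of them and the deficit is repaid by $r_{P}-1$), whereas you bound their number by the number of lines in the tangent cone, $S\le m_{P}(C_{i})\le d_{\min}-1$. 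Your bound on $S$ is weaker, so you need the extra arithmetic $d_{\min}\le d_{E}/m\le (n+m-1)/m$ and the inequality $m(n-m)\ge n-1$ on $1\le m\le (n-1)/2$; I checked that this closes in both parities, and your cases (a) and (b) are fine since distinct lines of $D$ meet only at $P$. The paper's route gives the cleaner final bound $\pd(C)\ge 2n-m-1$, but yours has the advantage of not leaning on an unjustified inequality. Two small repairs: $\pd(D)=0$ for concurrent lines follows from Lemmas~\ref{lem:2.4} and~\ref{lem:polar degree formula} (each line has polar degree $0$ and all pairwise intersections are the single point $P$), not from Lemma~\ref{lem:first}, which concerns P\l oski curves; and you should state explicitly the fact you are using, namely that $I_{P}(L,C_{i})>m_{P}(C_{i})$ forces $L$ to be a component of the tangent cone of $C_{i}$ at $P$, since that is what legitimizes the count $S\le d_{\min}-1$.
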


\begin{proof}
Let $l_{i} = \deg C_{i}$. Clearly, $2n=\deg C=\deg (C_{1} \cdots C_{m})+\deg (C_{m+1} \cdots C_{k}) \ge 2m+k > 2m+n$, i.e. $m<\frac{n}{2}$. Then, $\pd(C) = \pd(C_{1}) + \cdots +\pd(C_{m}) + \sharp(C_{1} \cdots C_{m} \cap C_{m+1} \cdots C_{k})-1 \ge  \sum_{i=1}^{m}(l_{i}-1)+(\sum_{p}(r_{p}-1)+\sharp(C_{1} \cdots C_{m} \cap C_{m+1} \cdots C_{k}))-1 \ge ((2n-k+m)-m)+(k-m)-1 = 2n-m-1 > \frac{3n}{2} -1 \ge n-1$, i.e. $\pd(C) \ge n$. By the same argument, we can get the result when $\deg C=2n+1$ with $k>n+1$. 
\end{proof}

\begin{prop} \label{prop:general} Let $C$ be a plane curve of $\deg C=d$. Then, $\pd(C) \ge \lfloor \frac{d}{2} \rfloor$ unless $C$ is concurrent lines. 
\end{prop}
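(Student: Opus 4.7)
The plan is to decompose $C = C_{1} \cup \cdots \cup C_{k}$ into distinct irreducible components of degrees $\ell_{1},\ldots,\ell_{k}$ and iterate Lemma \ref{lem:polar degree formula} to write
\[
\pd(C) \;=\; \sum_{i=1}^{k} \pd(C_{i}) \;+\; \sum_{i=2}^{k} \#\bigl((C_{1} \cup \cdots \cup C_{i-1}) \cap C_{i}\bigr) \;-\; (k-1),
\]
and then combine this with $\pd(C_{i}) \geq \ell_{i} - 1$ (from Lemma \ref{lem:2.4}, since $p_{g} \geq 0$ and $r_{p} \geq 1$) in a case analysis on the component structure. If $C$ is irreducible, then $\pd(C) \geq d - 1 \geq \lfloor d/2 \rfloor$ and we are done.

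If $C$ has a non-line irreducible component together with some line components, I would split according to whether the leftover lines are concurrent. When the leftover lines share a common point and there are enough of them, Lemma \ref{lem:pre} finishes the argument directly. Otherwise the non-line components contribute at least $\sum (\ell_{i}-1)$ via Lemma \ref{lem:2.4}, and since any two distinct components of $\mathbb{P}^{2}$ meet in at least one point, each non-line component also forces a new intersection with every preceding component, so the additivity formula pushes $\pd(C)$ past $\lfloor d/2 \rfloor$. Configurations with only a few components of large degrees are covered by Lemma \ref{lem:2.4} alone.

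The delicate case is when $C$ is a union of lines not all concurrent. Here every $\pd(C_{i}) = 0$, so the estimate reduces to $\pd(C) = \sum_{p}(m_{p} - 1) - (d - 1)$, where $m_{p}$ counts the lines through $p$. I would argue by induction on $d$: the base case $d = 3$ gives three non-concurrent lines with three simple double points, so $\pd = 1 = \lfloor 3/2 \rfloor$. For $d \geq 4$, a short argument shows that not every $(d-1)$-subset of lines can be concurrent (two distinct concurrency points would force $d - 2$ lines to coincide with the line joining them), so one can strip off a line $L$ leaving a non-concurrent residue $C'$; the inductive hypothesis yields $\pd(C') \geq d - 3$, while non-concurrency of $C'$ caps its maximum multiplicity at $d - 2$, which forces $\#(L \cap C') \geq 2$ since the $d-1$ incidences of $L$ with $C'$ cannot all collapse to one point. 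The additivity formula then gives $\pd(C) \geq (d-3) + 2 - 1 = d - 2 \geq \lfloor d/2 \rfloor$ for $d \geq 3$. The main obstacle I expect is the bookkeeping in the intermediate mixed cases where Lemma \ref{lem:pre} does not apply and the threshold $\lfloor d/2 \rfloor$ must be reached by combining several weak lower bounds; the irreducible and pure-line-arrangement extremes are the cleanest endpoints of the argument.
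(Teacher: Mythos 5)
Your overall strategy --- decompose $C$ into irreducible components, iterate Lemma \ref{lem:polar degree formula}, bound each $\pd(C_i)$ from below by $\ell_i-1$ via Lemma \ref{lem:2.4}, and treat concurrent-line subconfigurations separately --- is the same as the paper's. Your treatment of the pure line-arrangement case is, however, a genuinely different and cleaner route: the identity $\pd(C)=\sum_p(m_p-1)-(d-1)$, the observation that some $(d-1)$-subset of a non-concurrent arrangement must again be non-concurrent, and the resulting induction giving $\pd(C)\ge d-2$ are all correct, and they replace the paper's appeal to the low-degree computations of Fassarella--Medeiros together with an induction that peels off two lines at a time.

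The gap is in the mixed case. First, the claim that ``each non-line component forces a new intersection with every preceding component, so the additivity formula pushes $\pd(C)$ past $\lfloor d/2\rfloor$'' only recovers $\pd(C)\ge d-k$; when $C$ has $m\ge 1$ line components, $k$ can be as large as $(d+m)/2$ (all non-line components conics), so this bound bottoms out at $(d-m)/2<\lfloor d/2\rfloor$ and does not reach the threshold --- indeed the odd P\l oski curve attains $\pd(C)=d-k=\lfloor d/2\rfloor$ exactly, so nothing is being ``pushed past.'' Second, Lemma \ref{lem:pre} does not finish the concurrent-leftover-lines case: its hypothesis requires the non-line components to be \emph{singular} irreducible curves, so it says nothing about, for example, a pencil of lines through the common point of a family of smooth conics --- which is precisely the neighborhood of the extremal P\l oski configurations and is the case that cannot be waved away. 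The missing ingredient is the B\'ezout-type count the paper uses at the corresponding point of its proof: among several lines through a common point $q$, at most one can meet a fixed smooth component of degree $\ge 2$ only at $q$ (such a line must be the tangent there), so $m$ concurrent lines contribute at least $m$ distinct points to their intersection with the non-line part; it is this extra contribution, beyond the single new point per component pair, that closes the estimate. Until you supply that count (or an equivalent one), the mixed case is not proved.
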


\begin{proof}
First, we consider the case when $\deg C = 2n$. If $C$ is irreducible, it is clear by Lemma \ref{lem:2.4}. So let $C = C_{1} \cdots C_{k}$, where $C_{i}$'s are irreducible plane curves and $\deg C_{i}=l_{i}$. Then, $\pd(C) \ge \sum_{i=1}^{k}(l_{i}-1)=2n-k$. So if $k \le n$, then $\pd(C) \ge n$. So let $k>n$. Then, there exists at least 2 components which are lines. So we use induction on $n$. For small $n$, we know that the result is true. (See \cite[Theorem 3.3, 3.4]{FM}.) So suppose it holds for $n-1$. Let $C=C_{1} \cdots C_{k-2}C_{k-1}C_{k}$, where $C_{k-1}, C_{k}$ are lines. Then, $\pd(C)=\pd(C_{1} \cdots C_{k-2}) + \pd(C_{k-1}C_{k})+\sharp(C_{1} \cdots C_{k-2} \cap C_{k-1}C_{k}) -1 \ge (n-1) +\sharp(C_{1} \cdots C_{k-2} \cap C_{k-1}C_{k}) -1=n-2+\sharp(C_{1} \cdots C_{k-2} \cap C_{k-1}C_{k})$ by induction hypothesis. It is enough to consider the case when $\sharp(C_{1} \cdots C_{k-2} \cap C_{k-1}C_{k})=1$. However, by B$\acute{e}$zout's Theorem, it can happen only for the following two cases: first case is when all smooth components are lines that intersect at one point, and singular, irreducible components exist, and the second case is when $C$ is concurrent lines. However, by Lemma \ref{lem:pre}, for case 1, $\pd(C) \ge n$. Therefore, $\pd(C) \ge n$ unless $C$ is concurrent lines. For $d=2n+1$, we can use the same argument to get the result. 
\end{proof}

\begin{cor} \label{cor:bound}
Let $C = V(h)$ be a plane curve of degree $d$ in $\mathbb{P}^{2}$ whose singularities are all isolated. Then, $\sum_{p}\mu_{p}(h) \le (d-1)^{2} - \lfloor \frac{d}{2} \rfloor$ unless $C$ is concurrent lines. 
\end{cor}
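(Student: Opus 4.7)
The plan is to read this off directly from the two results immediately preceding it, so the proof should be essentially a one-line combination, and I will organize my proposal around making that chain of implications explicit and checking that the hypotheses line up.

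First, I would invoke the Milnor formula (Lemma \ref{lem:Milnor formula}) in the special case $n=2$. Since $C = V(h) \subset \mathbb{P}^{2}$ is a hypersurface with only isolated singularities by hypothesis, the lemma applies and yields the identity
\[
\pd(C) = (d-1)^{2} - \sum_{p} \mu_{p}(h).
\]
This converts the problem of bounding the Milnor sum from above into that of bounding the polar degree from below, which is exactly the form in which Proposition \ref{prop:general} is stated.

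Next, I would appeal to Proposition \ref{prop:general}: because $C$ is assumed not to be concurrent lines, we have $\pd(C) \ge \lfloor d/2 \rfloor$. Substituting the Milnor formula and rearranging gives
\[
\sum_{p} \mu_{p}(h) = (d-1)^{2} - \pd(C) \le (d-1)^{2} - \left\lfloor \tfrac{d}{2} \right\rfloor,
\]
which is exactly the desired inequality.

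Since both ingredients are already in place, there is no real obstacle; the only thing to double-check is that the hypotheses of the two lemmas genuinely coincide with those of the corollary. The Milnor formula requires isolated singularities (which we have assumed) and $C$ to be a hypersurface of degree $d$; Proposition \ref{prop:general} requires $C$ to be a plane curve that is not concurrent lines, matching the exclusion in the corollary. Hence the statement follows with no further case analysis needed.
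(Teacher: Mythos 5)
Your proposal is correct and matches the paper's own proof exactly: both combine the Milnor formula (Lemma \ref{lem:Milnor formula}) with the lower bound $\pd(C) \ge \lfloor d/2 \rfloor$ from Proposition \ref{prop:general} to obtain $\sum_{p}\mu_{p}(h) = (d-1)^{2} - \pd(C) \le (d-1)^{2} - \lfloor \frac{d}{2} \rfloor$. Your additional check that the hypotheses of the two ingredients align with those of the corollary is a reasonable extra precaution but adds nothing beyond the paper's one-line argument.
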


\begin{proof}
By Proposition \ref{prop:general} and Lemma \ref{lem:Milnor formula}, $\sum_{p}\mu_{p}(h) = (d-1)^{2} - \pd(C) \le (d-1)^{2} - \lfloor \frac{d}{2} \rfloor$. 
\end{proof}

Since the Milnor number is nonnegative, we get the following corollary. (For another proof, see \cite[Theorem 1.1]{AP})

\begin{cor}
Let $C = V(h)$ be a plane curve of degree $d$ in $\mathbb{P}^{2}$ whose singularities are all isolated. Then, for any singular points $p$, $\mu_{p}(h) \le (d-1)^{2} - \lfloor \frac{d}{2} \rfloor$ unless $C$ is concurrent lines. 
\end{cor}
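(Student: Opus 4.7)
The plan is to deduce this corollary immediately from Corollary \ref{cor:bound}, using only the elementary fact that the Milnor number of an isolated hypersurface singularity is a nonnegative integer. Indeed, for a reduced isolated singularity of a plane curve, $\mu_p(h) = \dim_k (O_p/J_f) \ge 0$, with equality precisely at smooth points.

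Concretely, I would fix an arbitrary singular point $p$ of $C$, and write
\[
\mu_{p}(h) \;\le\; \sum_{q}\mu_{q}(h),
\]
where the sum ranges over all singular points of $C$ (which is a finite set, since all singularities of $C$ are assumed isolated). The inequality holds because every term in the sum is $\ge 0$. Then, assuming $C$ is not concurrent lines, Corollary \ref{cor:bound} yields
\[
\sum_{q}\mu_{q}(h) \;\le\; (d-1)^{2} - \left\lfloor \tfrac{d}{2} \right\rfloor,
\]
and combining the two inequalities gives the claimed bound. There is really no obstacle here: the whole content of the statement is already contained in Corollary \ref{cor:bound}, and the passage from the sum to a single term is just nonnegativity of the Milnor number. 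The remark preceding the corollary already notes that this recovers P\l oski's pointwise bound from \cite[Theorem 1.1]{AP}, so the only work is to write the two-line deduction cleanly.
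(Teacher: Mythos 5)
Your proposal is correct and is exactly the paper's argument: the paper derives this corollary from Corollary \ref{cor:bound} by noting that the Milnor number is nonnegative, so a single term is bounded by the sum. Nothing further is needed.
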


\begin{thm} \label{thm:Ploski}
Let $C$ be a plane curve whose singularities are all isolated and $\deg C=d \ge 5$. Then, $\pd(C)=\lfloor \frac{d}{2} \rfloor$ if and only if C is a P\l oski curve. 
\end{thm}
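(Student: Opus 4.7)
The ``if'' direction is Lemma \ref{lem:first}. For the converse, assume $\pd(C)=\lfloor d/2\rfloor=:n$, and write the irreducible decomposition $C=C_{1}\cup\cdots\cup C_{k}$ with $l_{i}=\deg C_{i}$. My strategy is in two stages: (I) trace the equality case of Proposition \ref{prop:general} to force $C$ to have a unique singular point $P$ through which all the $C_{i}$ pass and at which they pairwise meet only; (II) apply Lemma \ref{lem:Milnor formula} and then Lemma \ref{lem:Ploski} to identify $C$ as a P\l oski curve.

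For step (I), iterating Lemma \ref{lem:polar degree formula} gives
\[
\pd(C)=\sum_{i=1}^{k}\pd(C_{i})+\sum_{j=2}^{k}\bigl(N_{j}-1\bigr),\qquad N_{j}:=\sharp\bigl((C_{1}\cup\cdots\cup C_{j-1})\cap C_{j}\bigr),
\]
where $\pd(C_{i})\ge l_{i}-1$ by Lemma \ref{lem:2.4} and $N_{j}\ge 1$ by B\'ezout. Hence $\pd(C)\ge d-k$, which forces $k\ge\lceil d/2\rceil$. To rule out $k>\lceil d/2\rceil$ I plan to induct on $d$, adapting the splitting argument from the proof of Proposition \ref{prop:general}: peel off two line components (which must exist once $k>\lceil d/2\rceil$) to reduce to a curve of degree $d-2$; either the inductive hypothesis of the present theorem places a P\l oski structure on the residual, whose rigidity at its central point then obstructs the minimal intersection count with the two extra lines, or Lemma \ref{lem:pre} handles the complementary concurrent-lines residual case. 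Once $k=\lceil d/2\rceil$ is in hand, saturating the chain $\pd(C)\ge d-k$ forces $\pd(C_{i})=l_{i}-1$ for every $i$, so each $C_{i}$ is smooth and rational by Lemma \ref{lem:2.4}, and $N_{j}=1$ for every $j\ge 2$; induction on $j$ then shows that all the $C_{i}$ pass through a single common point $P$ and are disjoint outside $P$. Since each $C_{i}$ is smooth, $P$ is the unique singular point of $C$.

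For step (II), Lemma \ref{lem:Milnor formula} now reads $\mu_{P}=(d-1)^{2}-\pd(C)=(d-1)^{2}-\lfloor d/2\rfloor$, and since $d\ge 5$, Lemma \ref{lem:Ploski} concludes that $C$ is a P\l oski curve.

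The main obstacle is the exclusion $k>\lceil d/2\rceil$ in step (I): each additional component leaves less slack in the polar-degree accounting, and the geometric rigidity arguments must rule out delicate mixed configurations such as ``P\l oski-plus-two-extra-lines.'' The central local input I expect to exploit is that through a given point $P$ there is at most one line tangent at $P$ to a prescribed smooth conic, which obstructs the simultaneous $N_{j}=1$ conditions that equality would require.
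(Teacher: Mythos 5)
Your overall architecture for the forward direction (force a unique singular point $P$, then combine Lemma \ref{lem:Milnor formula} with Lemma \ref{lem:Ploski}) coincides with the paper's treatment of the case where all irreducible components are smooth. The genuine gap is in your equality analysis: from Lemma \ref{lem:2.4}, $\pd(C_i)=l_i-1$ forces $p_g(C_i)=0$ and $r_p=1$ at every point, i.e.\ $C_i$ is rational with only unibranch singularities; it does \emph{not} force $C_i$ to be smooth. A cuspidal cubic has $p_g=0$ and a single branch at its cusp, so $\pd=2=\deg-1$ and it saturates your inequality while being singular. Hence the equality case does not yield a unique singular point of $C$: a component may carry a cusp away from the common intersection point, $\sum\mu_p$ then splits over at least two points, and Lemma \ref{lem:Ploski} --- a statement about a single local Milnor number --- cannot be invoked. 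The paper devotes its entire Case 2 to excluding configurations with singular irreducible components (including exactly the ``cuspidal cubic plus line plus conics'' configuration your argument leaves open), by separate polar-degree and intersection-count estimates; none of that is recoverable from your chain of inequalities.

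Two further, smaller problems. Your exclusion of $k>\lceil d/2\rceil$ is only a sketch: peeling off two lines gives $\pd(C)=\pd(C')+\sharp\bigl(C'\cap(L_1\cup L_2)\bigr)-1$ with $\pd(L_1\cup L_2)=0$ and $\pd(C')\ge n-1$, so you must rule out $\sharp\le 2$, not just $\sharp=1$; moreover the induction has no base, since for $d=5,6$ the residual curve has degree $3$ or $4$, where the theorem's hypothesis $d\ge5$ fails (the paper falls back on the low-degree results of Fassarella--Medeiros). Also, $N_j=1$ for all $j$ in your fixed ordering does not by itself force a single common point: $C_1\cap C_2=\{P\}$ and $C_1\cap C_3=C_2\cap C_3=\{Q\}$ with $P\ne Q$ satisfies $N_2=N_3=1$; you need the additional observation that $\sum_j(N_j-1)$ is independent of the ordering of the components, so equality forces $N_j=1$ for every reordering. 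That point is fixable; the missing treatment of singular components is the substantive omission.
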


\begin{proof}
We already proved the reverse direction, so we need to prove the remaining one. Let $C=C_{1} \cdots C_{k}$ of degree $d$, where $C_{i}$'s are irreducible plane curves of $\deg C_{i}=l_{i}$. Now, we consider the following 2 cases: 

Case 1) First, suppose that all irreducible components of $C$ are smooth, i.e. $C_{i}$'s are all smooth. By Lemma \ref{lem:Ploski}, it suffices to show that if $\pd(C)=\lfloor \frac{d}{2} \rfloor$, then $C$ has only one isolated singular point. So suppose that $C$ at least two isolated singular points with $\pd(C)=\lfloor \frac{d}{2} \rfloor$. First, let $d=2n$. In this case, $n=\pd(C)=\pd(C_{1})+ \cdots + \pd(C_{k}) + (\sharp(C_{1} \cap C_{2})+ \cdots + \sharp(C_{1} \cdots C_{k-1} \cap C_{k}))-(k-1) \ge \sum_{i=1}^{k}(l_{i}-1)-(k-1)+(*) = (2n-2k+1)+(*)$, where $(*)=\sharp(C_{1} \cap C_{2})+ \cdots + \sharp(C_{1} \cdots C_{k-1} \cap C_{k})$, i.e. $n \ge (2n-2k+1)+(*)$. Since $C$ has at least 2 isolated singularities and all $C_{i}'s$ are smooth, some $\sharp$ in $(*)$ should be bigger than or equal to 2, i.e. $(*) \ge k$. So $n \ge (2n-2k+1)+(*) \ge 2n-k+1$, i.e. $k \ge n+1$. It means that $C$ has at least two lines as its irreducible components. Let $C=C_{1}C_{2}C_{3} \cdots C_{k}$, where $C_{1}, C_{2}$ are lines. Now, we consider $(*)$ again. Also, by reordering, if necessary, we can let $m$ to be the maximal number such that $C_{1}, \dots C_{m}$ are lines and intersect at one point. If $m=2$, since $\sharp(C_{1} \cap C_{2})=1$ and $\sharp(C_{1}C_{2} \cap C_{3}) \ge 2, \cdots \sharp(C_{1} \cdots C_{k-1} \cap C_{k}) \ge 2$, then $(*) \ge 2k-3$. So $n \ge 2n-2$, i.e. $n \le 2$, which is a contradiction because $d \ge 5$. So $m>2$. Then, $n=\pd(C_{1} \cdots C_{m})$ $+\pd(C_{m+1} \cdots C_{k})+ \sharp(C_{1} \cdots C_{m} \cap C_{m+1} \cdots C_{k})-1$, i.e. $\pd(C_{m+1} \cdots C_{k})=(n+1)$ $- \sharp(C_{1} \cdots C_{m} \cap C_{m+1} \cdots C_{k})$. Since $\sharp(C_{1} \cdots C_{m} \cap C_{m+1} \cdots C_{k}) \ge m$ (by using the fact that all $C_{i}$'s are smooth and by B$\acute{e}$zout's Theorem) and $\pd(C_{m+1} \cdots C_{k}) \ge \lfloor \frac{2n-m}{2} \rfloor$, we get $\lfloor \frac{2n-m}{2} \rfloor \le \pd(C_{m+1} \cdots C_{k}) \le n-m+1$. If $m=2s$, then $n-s \le n-2s+1$, i.e. $s \le 1$, which is a contradiction because $m>2$. If $m=2s+1$, then $n-s-1 \le n-2s$, i.e. $s \le 1$. Since $m>2$, we only need to check when $m=3$. If $m=3$, $n \ge (2n-2k+1)+ (*) \ge 2n-3$, i.e. $n \le 3$. However, it does not happen when $d=6$ by \cite[Theorem 3.3, 3.4]{FM}. So we need to consider when $d=2n+1$. However, by the same argument, we can prove it. So we are done in the first case. 

Case 2) Suppose that C has singular irreducible components. So let $C=C_{1} \cdots C_{m}C_{m+1} \cdots C_{k}$, where $C_{1}, \dots , C_{m}$ are singular and $C_{m+1}, \dots , C_{k}$ are smooth of $\deg C_{i}=l_{i}$ and $m \ge 1$. First, let $d=2n$. In this case, $n=\pd(C) \ge \pd(C_{1})+ \cdots \pd(C_{k}) \ge \sum_{i=1}^{k}(l_{i}-1)=2n-k$,i.e. $k \ge n$. If $k>n$, then there exists at least 2 irreducible components of $C$ which are lines. Since they are smooth, we assume that $C=(C_{1} \cdots C_{m})(C_{m+1}C_{m+2} \cdots C_{k})$, where $C_{m+1}, C_{m+2}$ are lines. Let $\deg(C_{1} \cdots C_{m}) = l$, $\deg(C_{m+1} \cdots C_{k})=2n-l$. Since $C_{m+1} \cdots C_{k}$ is not a P\l oski curve, by Case 1), $\pd(C_{m+1} \cdots C_{k})$ $ > \lfloor \frac{2n-l}{2} \rfloor$. Then, if $l=2s$, $n=\pd(C) \ge (\pd(C_{1})+ \cdots \pd(C_{m}))+\pd(C_{m+1} \cdots C_{k}) > \sum_{i=1}^{m}(l_{i}-1)+n-s=s+n-m$, i.e. $m>s$. However, $2n=\deg(C_{1} \cdots C_{m})+\deg(C_{m+1} \cdots C_{k}) \ge 3m+2n-l > 2n+s$, which is a contradiction. So let $l=2s+1$. Also, $n=\pd(C)>l-m+n-s-1$, i.e. $m>s$. Then, $2n=\deg(C_{1} \cdots C_{m})+\deg(C_{m+1} \cdots C_{k}) \ge 3m+2n-l > 3s+2n-2s-1=2n+(s-1) \ge 2n$, which is a contradiction. So when $k>n$, $\pd(C) \neq n$. Finally, it remains to prove when $k=n$. Let $k=n$. Then, $C$ has at least one line component. If there exists more than two line components in $C$, we can use the same argument so that we get a contradiction. So we only need to consider when $C$ has only one line component. It is clear that $C$ must be of the form $C=C_{1}C_{2} \cdots C_{n}$, where $C_{1}$ is of degree 3, $C_{2}$ is a line, and all $C_{i}$, $i \ge 3$, are smooth conics. For convenience, let $F=C_{2}C_{3} \cdots C_{n}$. Then, $n=\pd(C)=\pd(C_{1})+\pd(F)+\sharp(C_{1} \cap F)-1$. Since $C_{i}$'s, $i \ge 2$, are all smooth, we consider the following 2 cases: 

Case 2-1) First, let $F$ be a P\l oski curve. Since irreducible singular plane curves of degree 3 are either cusps or nodal curves, we need to consider two cases. First, let $C$ be a cusp. If $k=3$, i.e. $\deg C=6$, by \cite[Theorem 3.3, 3.4]{FM}, $\pd(C)>3$. For $k \ge 4$, we can easily get that $\sharp(C_{1} \cap F) \ge 2$. So $n=\pd(C_{1})+\pd(F)+ \sharp(C_{1} \cap F) -1 \ge 2+(n-2)+2-1 $ $= n+1$, which is a contradiction. So we need to consider when $C$ is a nodal curve. Since $\pd(C_{1}) \ge 4$ \cite[Theorem 3.4]{FM}, $n=\pd(C_{1})+\pd(F)+ \sharp(C_{1} \cap F) -1 \ge 4+(n-2)+1-1 $ $\ge n+2$, which is a contradiction.

Case 2-2) Next, let $F$ be not a P\l oski curve. Then $n=\pd(C_{1})+\pd(F)+\sharp(C_{1} \cap F)-1 > 2+(n-2)+1-1=n$, which is a contradiction. 

For $d=2n+1$, we can use the same argument to get the result. Therefore, if $C$ contains singular irreducible components, $\pd(C) \neq n$. 

So by Case 1), 2), if $\pd(C)=n$ and $\deg C \ge 5$, then $C$ is a P\l oski curve. 
\end{proof}

By Hilbert-Mumford criterion, we can check the semi-stability of P\l oski curves. 

\begin{prop} \label{prop:criterion}
An even P\l oski curve is strictly semi-stable, and an odd P\l oski curve is unstable. 
\end{prop}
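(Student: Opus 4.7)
The plan is to fix a normal form for each Płoski curve adapted to its common singular point $P$, expand the defining polynomial, and apply Theorem \ref{thm:Hilbert-Mumford} using the weight computation for plane curves recalled in the Remark of Section 2.

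First I would choose coordinates so that $P = [0,0,1]$ and the common tangent line at $P$ is $\{y = 0\}$. A smooth conic through $P$ tangent to $\{y=0\}$ takes the form $\alpha x^{2} + \beta xy + \gamma y^{2} + \delta yz = 0$ with $\alpha, \delta \neq 0$. A direct parametrization $y = -(\alpha/\delta)\,x^{2} + \cdots$ substituted into a second such conic shows that the local intersection multiplicity at $P$ equals $4$ only when both $\alpha/\delta$ and $\beta/\delta$ are constant across the two conics; after rescaling and the substitution $x \mapsto x + \lambda y$ (which preserves $P$ and the tangent), every conic component of the Płoski curve is normalized to $x^{2} + c_{i}y^{2} + yz$ with the $c_{i}$ pairwise distinct.

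For the even case $d = 2n$ I expand $f = \prod_{i=1}^{n}(x^{2} + c_{i}y^{2} + yz)$: every monomial of $f$ has the form $x^{2a}y^{2b+c}z^{c}$ with $a+b+c = n$ and $a,b,c \ge 0$, and in particular the extremal monomials $x^{2n}$ and $y^{n}z^{n}$ always appear with coefficient $1$. By the Remark, such a monomial contributes the weight $(2a-c,\,2b)$, so $\overline{wt(f)}$ is the closed triangle with vertices $(2n,0),\,(-n,0),\,(0,2n)$. The origin lies on the segment between $(-n,0)$ (contributed by $y^{n}z^{n}$) and $(2n,0)$ (contributed by $x^{2n}$), so $0 \in \overline{wt(f)}$ but $0 \notin interior(\overline{wt(f)})$, and Theorem \ref{thm:Hilbert-Mumford} yields that $f$ is semi-stable but not stable. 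For the odd case $d = 2n+1$ the extra factor $y$ raises every $y$-exponent by one: the monomials of $f = y \prod_{i=1}^{n}(x^{2} + c_{i}y^{2} + yz)$ are $x^{2a}y^{2b+c+1}z^{c}$ with weights $(2a-c,\,2b+1)$, so every weight has second coordinate at least $1$, $\overline{wt(f)}$ is contained in $\{v \ge 1\}$ and misses $0$, and Theorem \ref{thm:Hilbert-Mumford} yields that $f$ is unstable.

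The main technical subtlety is that Theorem \ref{thm:Hilbert-Mumford} is phrased for a torus, whereas one ultimately wants $SL_{3}$-(semi)stability. For the odd Płoski curve this is harmless, since exhibiting a single destabilizing $1$-parameter subgroup already forces instability. For the even Płoski curve one must also check that no other maximal torus of $SL_{3}$ pushes the origin out of $\overline{wt(f)}$: here $\mathrm{mult}_{P}(f) = n < 2d/3$, so Proposition \ref{prop:unstability} is inapplicable, and the persistence of the corner monomials $x^{2n}$ and $y^{n}z^{n}$ under coordinate changes adapted to the flag $(P,\{y=0\})$ rules out any flag-concentrated destabilization. This upgrade from torus-semistability to $SL_{3}$-semistability in the even case is the step that genuinely requires care.
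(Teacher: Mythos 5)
Your proposal is essentially the paper's own proof. The paper normalizes an even P\l oski curve as $(x^{2}-yz+z^{2})\cdots(x^{2}-yz+nz^{2})$ (singular point $[0,1,0]$, common tangent $z=0$), reads off the monomials $x^{2a}y^{b}z^{2n-2a-b}$, observes that all weights lie in a closed half-plane while $(2n,0)$ and $(-n,0)$ both occur, and concludes strict semistability; for the odd curve the extra linear factor pushes every weight strictly off the origin, giving instability. Your normal form $\prod(x^{2}+c_{i}y^{2}+yz)$, whose derivation you justify more carefully than the paper does, is the same computation in different coordinates and is correct --- note only that the vertex $(0,2n)$ of your triangle is present only when all $c_{i}\neq 0$, which is harmless since the argument uses just the two corner monomials and the half-plane containment. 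The one place you go beyond the paper, the passage from semistability for a single maximal torus to $SL_{3}$-semistability in the even case, is a genuine issue: a single coordinate system suffices to certify ``not stable'' and, in the odd case, ``unstable,'' but $SL_{3}$-semistability requires $0\in\overline{wt}$ for \emph{every} choice of coordinates, and your appeal to ``persistence of the corner monomials under adapted coordinate changes'' is not yet an argument. Be aware, however, that the paper's proof has exactly the same gap and does not acknowledge it; the standard way to close it is the classification of non-semistable plane curves by destabilizing flags (Kempf's worst one-parameter subgroup is adapted to the unique singular point and its tangent line, which is precisely the flag both you and the paper compute with).
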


\begin{proof}
Let $C$ be an even P\l oski curve. By changing projective coordinate, if necessary, we may assume that $C=(x^{2}-yz+z^{2})(x^{2}-yz+2z^{2}) \cdots (x^{2}-yz+nz^{2})$. Then, any variable that has nonzero coefficient is of the form $x^{2a}(yz)^{b}z^{2(n-a-b)}=x^{2a}y^{b}z^{2n-2a-b}$, where $0 \le a, b \le n$, $a+b \le n$. So $wt=\{(4a+b-2n, 2b+2a-2n) \in \mathbb{Z}^{2} : 0 \le a, b \le n, a+b \le n\}$. Since $2b+2a-2n \le 0$, $\bar{wt}$ lies in lower half-space of $\mathbb{R}^{2}$. Also, since $(2n,0)$, $(-n, 0)$, $(-2n, -2n)$ $\in$ $wt$, $(0,0) \in \bar{wt}$, but $(0, 0) \notin \text{interior of } \bar{wt}$. Therefore, an even P\l oski curve is strictly semi-stable. 

Also, by changing projective coordinate, if necessary, we may assume that an odd P\l oski curve is of the form $C=z(x^{2}-yz+z^{2})(x^{2}-yz+2z^{2}) \cdots (x^{2}-yz+nz^{2})$. So by the similar argument, we can get $(0,0) \notin \bar{wt}$. Therefore, an odd P\l oski curve is unstable. 
\end{proof}

So we can summarize what we get. 

\begin{thm}
Let $C$ be a plane curve of degree $d \ge 5$ in $\mathbb{P}^{2}$ whose singularities are all isolated. Suppose $C$ is not concurrent lines. Then we have the followings:
\begin{enumerate}[1)]
\item When $d=2n$, $\sum \mu_{p} \le (d-1)^{2}-\lfloor{\frac{d}{2}}\rfloor$ with equality if and only if $C$ is an even P\l oski curve. \\
For semi-stable curves, $\sum \mu_{p} \le (d-1)^{2}-\lfloor{\frac{d}{2}}\rfloor$ with equality if and only if $C$ is an even P\l oski curve. \\
For stable curves, $\sum \mu_{p} \le (d-1)^{2}-\lfloor{\frac{d}{2}}\rfloor-1$ 
\item When $d=2n+1$, $\sum \mu_{p} \le (d-1)^{2}-\lfloor{\frac{d}{2}}\rfloor$ with equality if and only if $C$ is an odd P\l oski curve. \\
For semi-stable curves, $\sum \mu_{p} \le (d-1)^{2}-\lfloor{\frac{d}{2}}\rfloor-1$ \\
For stable curves, $\sum \mu_{p} \le (d-1)^{2}-\lfloor{\frac{d}{2}}\rfloor-1$ 
\end{enumerate}
\end{thm}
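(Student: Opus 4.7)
The plan is to assemble this final theorem from the results already proved in the section, the whole argument being driven by the Milnor formula $\sum_{p}\mu_{p}(h) = (d-1)^{2} - \pd(C)$ of Lemma \ref{lem:Milnor formula}. Under this identity every upper bound on the Milnor sum is equivalent to a lower bound on the polar degree, and equality cases transfer verbatim, so the theorem reduces entirely to statements about $\pd(C)$. The only inputs needed are the universal lower bound $\pd(C) \ge \lfloor d/2 \rfloor$ from Proposition \ref{prop:general}, its equality characterisation from Theorem \ref{thm:Ploski}, and the stability classification of P\l oski curves from Proposition \ref{prop:criterion}.

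For the unconditional bounds in the first line of each of 1) and 2) I would simply quote Corollary \ref{cor:bound} for the inequality and Theorem \ref{thm:Ploski} for the equality characterisation; no further work is required, as the hypothesis that $C$ is not concurrent lines is exactly what these statements need.

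The stability-refined bounds all hinge on the elementary but essential observation that $\pd(C)$ is an integer, so Theorem \ref{thm:Ploski} upgrades to the quantitative statement that if $C$ is not a P\l oski curve and not concurrent lines, then $\pd(C) \ge \lfloor d/2 \rfloor + 1$. Combining this with Proposition \ref{prop:criterion}, the remaining sub-cases split according to the parity of $d$. When $d = 2n$, an even P\l oski curve is strictly semi-stable, so it is still admissible under the semi-stable hypothesis and the first-line bound and equality class are preserved verbatim; however an even P\l oski curve is not stable, so the stable hypothesis excludes P\l oski curves and forces $\pd(C) \ge n+1$, yielding $\sum \mu_p \le (d-1)^{2} - n - 1$. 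When $d = 2n+1$, an odd P\l oski curve is unstable, hence not semi-stable, so any semi-stable, and a fortiori any stable, curve is automatically not a P\l oski curve; thus $\pd(C) \ge n+1$ in both sub-cases, giving $\sum \mu_p \le (d-1)^{2} - n - 1$.

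Since the hard work is already contained in Theorem \ref{thm:Ploski} and Proposition \ref{prop:criterion}, I do not anticipate any real obstacle beyond careful bookkeeping of parities and of the semi-stable versus stable distinction. The one conceptually non-trivial point is the integrality step that converts the strict inequality ``$C$ not P\l oski implies $\pd(C) > \lfloor d/2 \rfloor$'' into the quantitative $\pd(C) \ge \lfloor d/2 \rfloor + 1$, which is what actually delivers the extra $-1$ in the refined bounds.
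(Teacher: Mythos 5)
Your proposal is correct and follows essentially the same route as the paper, whose proof is simply a citation of Corollary \ref{cor:bound}, Lemma \ref{lem:first}, Proposition \ref{prop:criterion}, and Theorem \ref{thm:Ploski}; you use exactly these ingredients, with the integrality of $\pd(C)$ supplying the extra $-1$ in the refined bounds. Your write-up just makes explicit the bookkeeping the paper leaves implicit.
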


\begin{proof}
By Corollary \ref{cor:bound}, Lemma \ref{lem:first}, Proposition \ref{prop:criterion}, and Theorem \ref{thm:Ploski}, we can get the result. 
\end{proof}

From now on, we find a least upper bound for the Milnor sum of plane curves and that of semi-stable plane curves of even degree. So the remaining part is to lessen an upper bound for the Milnor sum of stable curves of even degree and that of (semi)-stable curves of odd degree. In order to do this, we need the following lemmas. 

\begin{lem} \label{lem:impossible conics}
Let $C$ be a plane curve of degree 2n whose all irreducible components are conics. If $\pd(C) \le 2n-1$, then $C$ is either a P\l oski curve, (*), or ($\star$), where (*), ($\star$) are conics that intersect only at two points as the following figures show. 

\begin{figure}[H]
\centering
\begin{minipage}{.5\textwidth}
  \centering
  \begin{tikzpicture}[scale=0.5]
\draw[smooth, domain=0:6.28] plot ({0.5*cos(\x r)},{sin(\x r)});
\draw[smooth, domain=0:6.28] plot ({cos(\x r)},{sin(\x r)});
\draw[smooth, domain=0:6.28] plot ({2*cos(\x r)},{sin(\x r)}) 
(1.5,0) node{$\cdots$} (-1.5,0) node{$\cdots$};
\end{tikzpicture}
  \caption{(*)}
\end{minipage}%
\begin{minipage}{.5\textwidth}
  \centering
  \begin{tikzpicture}[scale=0.5]
\draw[smooth, domain=0:6.28] plot ({cos(\x r)},{sin(\x r)});
\draw[smooth, domain=0:6.28] plot ({1+cos(\x r)},{sin(\x r)});
\draw[smooth, domain=0:6.28] plot ({-0.3+cos(\x r)},{1.5*sin(\x r)});
\draw[smooth, domain=0:6.28] plot ({1.3+cos(\x r)},{1.5*sin(\x r)}) 
(1.5,0) node{$\cdots$} (-0.5,0) node{$\cdots$};
\end{tikzpicture}
  \caption{($\star$)}
\end{minipage}
\end{figure}
\end{lem}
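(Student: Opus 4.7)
My plan is to exploit the additivity of the polar degree under union. Writing $C = C_1 \cdots C_n$ with each $C_i$ an irreducible and hence smooth conic, Lemma \ref{lem:2.4} gives $\pd(C_i) = 1$, and iterating Lemma \ref{lem:polar degree formula} collapses to
\[
\pd(C) \;=\; 1 + \sum_{j=2}^{n}\sharp_j, \qquad \sharp_j := \sharp\bigl((C_1\cup\cdots\cup C_{j-1})\cap C_j\bigr).
\]
Since two distinct irreducible conics always meet by B\'ezout, each $\sharp_j \ge 1$, and the hypothesis $\pd(C) \le 2n-1$ is equivalent to the combinatorial bound $n - 1 \le \sum_{j=2}^{n} \sharp_j \le 2n-2$. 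A convenient reformulation uses the global intersection locus $U := \bigcup_{i<j}(C_i \cap C_j)$ and the incidence counts $n(p) := \#\{i : p \in C_i\}$: each $p \in U$ contributes exactly $n(p) - 1$ to the sum (it is recorded once in each $\sharp_{i_m}$ for $m \ge 2$, where $i_1 < \cdots < i_{n(p)}$ list the conics through $p$), so the inequality becomes $\sum_{p \in U}\bigl(n(p)-1\bigr) \le 2n-2$.

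The classification will then flow from this inequality combined with B\'ezout's theorem, which forces any two distinct conics to meet with total intersection multiplicity $4$. In the extremal case $\sum = n-1$, the only possibility is a single point $P$ with $n(P) = n$; B\'ezout collapses every pairwise intersection to multiplicity $4$ at $P$, so $C$ is an even P\l oski curve. For the remaining range $n \le \sum \le 2n-2$, I would argue by induction on $n$: assuming the classification for fewer conics, I examine how a new conic $C_n$ can be appended to $C_1\cdots C_{n-1}$ without exceeding the budget. Either the four B\'ezout intersections of $C_n$ with each $C_i$ concentrate at already-existing points of $U$ --- preserving a configuration of type $(*)$ or $(\star)$ --- or they spill into genuinely new points, which can be shown to push $\sum \sharp_j$ past $2n-2$. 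A careful accounting should single out exactly the two pictured configurations: $(*)$, where all $n$ conics share the same pair of points as their pairwise intersection, and $(\star)$, the slightly less rigid companion configuration shown in the second figure.

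The main obstacle will be the saturated case $\sum = 2n-2$ with $|U| \ge 3$, where the numerical bound is attained but the intersection data does not obviously collapse to two points. Ruling out such mixed configurations --- for instance a common point $P$ shared by every conic plus several scattered two-conic incidences elsewhere --- requires combining the set-theoretic counts $n(p)$ with the B\'ezout multiplicity data at each $p$, not merely the weak inequality above. Once $|U| = 2$ is forced outside the P\l oski case, separating the two resulting configurations into $(*)$ versus $(\star)$ should be a more mechanical step keyed on how each conic passes through the two common points.
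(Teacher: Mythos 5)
Your setup is sound and matches the paper's starting point: both arguments reduce $\pd(C)\le 2n-1$ via Lemma \ref{lem:2.4} and Lemma \ref{lem:polar degree formula} to the combinatorial bound $\sum_j \sharp_j \le 2n-2$, and the extremal case $\sum_j\sharp_j=n-1$ does collapse to the even P\l oski curve by the covering argument you sketch. But the proof as written has a genuine gap, and it is exactly the one you flag yourself as ``the main obstacle.'' The decisive step is not a refinement of the counting; it is a concrete geometric exclusion. Consider already $n=3$: take a P\l oski pair $C_1C_2$ meeting only at $P$ (multiplicity $4$), and append a conic $C_3$ through $P$ tangent to both, with $I_P(C_3,C_i)=3$ and one further common point $Q$ with each, $Q_1=Q_2=Q$. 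This configuration satisfies your budget ($\sum_j\sharp_j=1+2=3\le 2n-2=4$), is not P\l oski, and is not of type $(*)$ or $(\star)$ --- so your induction cannot close unless you prove such a curve does not exist. That nonexistence is precisely the content of the paper's proof: it writes the P\l oski pair as $x^2-yz$ and $x^2-yz+z^2$, imposes on a third conic the conditions of passing through $[0,1,0]$ with tangent line $z=0$ and meeting each of the two at exactly one further point, and derives a contradiction by direct computation. From this the paper extracts the quantitative input your ``careful accounting'' needs: if $C_1\cdots C_k$ is a maximal P\l oski subcurve with $k\ge 2$ and $C_{k+1}$ is any further conic, then $\sharp(C_1\cdots C_k\cap C_{k+1})\ge k+1$, which yields $\pd(C)\ge -k^2+nk+n>2n-1$ for $k\ge 2$, leaving only $k=1$ and hence the configurations $(*)$ and $(\star)$.

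In short: your combinatorial reformulation $\sum_{p\in U}(n(p)-1)\le 2n-2$ and the induction scheme are a legitimate skeleton, but without the tangency-exclusion computation (or an equivalent local intersection-multiplicity argument at the common point) the saturated cases are not ruled out, and the classification into exactly $(*)$ and $(\star)$ is asserted rather than proved. To complete the argument you must supply that coordinate (or intersection-theoretic) lemma; once it is in place, your accounting and the paper's become essentially the same proof.
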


\begin{proof}
For convenience, we denote the curve in FIGURE 3. and the curve in FIGURE 4. by (*), ($\star$), respectively. Let $\pd(C) \le 2n-1$ and let $C$ be not a P\l oski curve. Then, we need to show that $C$ is either (*) or ($\star$). For this, we need to show that there exists no such a form in FIGURE 5., where this is 3-conics that have common tangents with two intersection points. 
\begin{figure}[H]
\centering
\begin{tikzpicture}[scale=0.4]
\draw[smooth, domain=0:6.28] plot ({0.5*cos(\x r)},{sin(\x r)});
\draw[smooth, domain=0:6.28] plot ({cos(\x r)},{sin(\x r)});
\draw[smooth, domain=0:6.28] plot ({1.5*cos(\x r)},{0.48+1.5*sin(\x r)});
\end{tikzpicture}
\caption{impossible conics}
\end{figure}
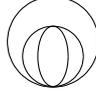
Suppose there exists such a curve. For convenience, denote (1), (2), (3) from inside to outside conics. Since (2) $\cup$ (3) is a P\l oski curve, we may let (2) to be $x^{2}-yz$, (3) to be $x^{2}-yz+z^{2}$. Clearly, (2) $\cap$ (3) = $\{[0, 1, 0]\}$ and their common tangent line at $[0, 1, 0]$ is $-z$. Since (1) is a conic, let (1) be $ax^{2}+by^{2}+cz^{2}+dxy+eyz+fzx$. Since (1) passes $[0,1,0]$, $b=0$. Now, we consider the following 2 cases: 

Case 1) First, let $a \neq 0$. We may let (1) to be $x^{2}+\alpha z^{2}+ \beta xy + \gamma yz + \delta xz$. Consider (2) $\cup$ (3) = $(x^{2}-yz)(x^{2}-yz+z^{2})$. Since the tangent line of (1) at $[0,1,0]$ is $\beta x + \gamma z$ and it has the same tangent with (2), (3), $\beta x + \gamma z = -z$, i.e. $\beta =0$, $\gamma = -1$, i.e. (1): $x^{2}+\alpha z^{2}-yz+\delta xz$. Since there exists another point in (1) $\cap$ (2) except $[0, 1, 0]$, we consider $(1) \cap (2)$, i.e. $(1)-(2) = z(\alpha z + \delta x)$. If $z=0$, then it is $[0,1,0]$. So let $\alpha z + \delta x =0$. If $\alpha =0$, then $\delta \neq 0$, i.e. (1): $x^{2}-yz+\delta xz$. However, it is easy that there exists $p \neq [0,1,0]$ such that $p \in (1) \cap (3)$ in this case, which is a contradiction. So let $\alpha \neq 0$. Also, in this case $\delta \neq 0$. (This can be obtained by the following argument: If $\delta =0$, (1): $x^{2}+\alpha z^{2} -yz$, so $(1) \cap (2) = \{[0,1,0]\}$, which is a contradiction.) Also, by some calculation, $[\frac{-(\alpha-1)}{\delta}z, \frac{(\alpha -1)^{2}}{\delta^{2}}z+z, z]$ is another common root of (1), (3). We get a contradiction again. So there exists no such a curve when $a \neq 0$. 

Case 2) Now, let $a=0$. We may let (1) to be $cz^{2}+dxy+eyz+fzx$. Since the tangent of (1) at $[0,1,0]$ is $dx+ez$, $dx+ez=-z$, i.e. $d=0$, $e=-1$, i.e. (1) is $cz^{2}-yz+fzx$. However, it is reduced, and it gives a contradiction. So there exists no such (1), i.e. we get the following: when $C_{1} \cdots C_{k}$ is a P\l oski curve but $C_{1} \cdots C_{k}C_{k+1}$ is not, if $C_{k+1}$ meet at some point of $C_{i}$ that is not a common point of $C_{1} \cdots C_{k}$, $C_{k+1}$ meet at some point of all $C_{i}$ that is not common. (It can be obtained by the following way: I proved that such (1) does not exist and also, by considering the intersection multiplicity, we can get it.) So suppose $C$ is neither a P\l oski curve, (*) nor $(\star)$. We assume that $C=C_{1} \cdots C_{k}C_{k+1} \cdots C_{n}$, where $C_{1} \cdots C_{k}$ are the maximal number of conics that forms a P\l oski curve in $C$. Since $C$ is not a P\l oski curve and a conic is a P\l oski curve, $1 \le k <n$. Then, by the above argument, $\pd(C_{1} \cdots C_{n}) \ge -k^{2}+nk+n$ (because $\sharp(C_{1} \cap C_{2})=1$, $\sharp(C_{1} \cdots C_{k-1} \cap C_{k})=1$, and $\sharp(C_{1} \cdots C_{k} \cap C_{k+1}) \ge 1+k$, $\sharp(C_{1} \cdots C_{n-1} \cap C_{n}) \ge 1+k$). Since $1 \le k < n,$ minimum occurs when $k=1,$ i.e. when $k=1$, $\pd(C) \ge 2n-1$ ,and $\pd(C) > 2n-1,$ otherwise. However, when $k=1,$ it is clear that $\pd(C)=2n-1$ if and only if $C$ is either (*) or $(\star),$ which is a contradiction. Therefore, $\pd(C) > 2n-1$ if $C$ is neither a P\l oski curve, (*), or $(\star)$. 
\end{proof}

\begin{lem} \label{lem:polar degree of conics}
Let $C$ be a stable plane curve of $\deg C=2n$ whose all irreducible components are conics. Then $\pd(C) > 2n-1$. 
\end{lem}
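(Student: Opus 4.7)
The plan is to combine Lemma \ref{lem:impossible conics} with the Hilbert-Mumford criterion. Assume for contradiction that $C$ is stable with $\pd(C) \le 2n-1$ and every irreducible component a conic. Lemma \ref{lem:impossible conics} then forces $C$ to be an even P\l oski curve, or the configuration $(*)$, or the configuration $(\star)$, so it suffices to rule out stability in each of these three cases.

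The P\l oski case is immediate from Proposition \ref{prop:criterion}: since $\deg C = 2n$, $C$ would be an even P\l oski curve, which is strictly semi-stable and hence not stable.

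For the configuration $(*)$, I would choose projective coordinates so that the two common points of the $n$ conics are $[1,0,0]$ and $[0,1,0]$; the common tangency at these two points then normalises the pencil of conics containing all $C_i$ to $\lambda\, xy + \mu\, z^{2}$. Hence for pairwise distinct $\alpha_i \in k\x$,
\[
h \;=\; \prod_{i=1}^{n}\bigl(xy+\alpha_i z^{2}\bigr),
\]
and expanding this product shows that the only monomials appearing in $h$ are $x^{a} y^{a} z^{2n-2a}$ for $a = 0, 1, \dots, n$. By the weight formula in the remark at the end of Section 2, the weight of $x^{a} y^{a} z^{2n-2a}$ under the standard $G_{m}^{2}$-action is $(3a-2n,\,3a-2n)$, so every weight lies on the diagonal $\{y=x\}\subset \mathbb{R}^{2}$. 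Thus $\bar{wt}(h)$ is a one-dimensional line segment with empty interior in $\mathbb{R}^{2}$, and Theorem \ref{thm:Hilbert-Mumford} implies $C$ is not stable --- a contradiction.

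For the configuration $(\star)$, the same strategy applies: after choosing projective coordinates adapted to the two singular points of $C$ and to the common tangency data that characterises $(\star)$, the defining polynomial $h$ is forced to lie in a proper linear subspace of $Pol_{2n}(k^{3})$ whose weight support is contained in a proper affine subspace of $\mathbb{R}^{2}$. Theorem \ref{thm:Hilbert-Mumford} then once more rules out stability. The main obstacle will be writing down the correct normal form for $(\star)$ and extracting the explicit monomial list; once that is done, the degeneracy of $\bar{wt}(h)$ in $\mathbb{R}^{2}$ is a direct weight computation mirroring the $(*)$ case. Combining the three cases yields $\pd(C) > 2n-1$ for any stable $C$, as required.
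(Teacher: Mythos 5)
Your overall strategy is the same as the paper's: reduce via Lemma \ref{lem:impossible conics} to showing that the even P\l oski curve, $(*)$, and $(\star)$ are not stable, and verify this with Theorem \ref{thm:Hilbert-Mumford} applied to explicit normal forms. Your treatment of $(*)$ is correct and in fact more detailed than the paper's one-line assertion: the normal form $\prod_{i}(xy+\alpha_i z^{2})$ is legitimate (the two tangent lines and the line through the two base points form a genuine coordinate triangle), and the weights $(3a-2n,3a-2n)$ do all lie on the diagonal, so $0$ cannot be an interior point of $\bar{wt}$. (The paper uses the equivalent normal form $\prod_i(x^2-\alpha_i yz)$, whose weights lie on the horizontal axis.)

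The one real gap is the $(\star)$ case, which you leave as a sketch, and the mechanism you predict there is not the one that actually occurs. A normal form for $(\star)$ is $\prod_{i=1}^{n}(x^{2}-yz+\alpha_i xz)$ with distinct $\alpha_i\in k\x$: these conics meet at $[0,1,0]$ with multiplicity $3$ and at $[0,0,1]$ transversally. Expanding, the monomials are $(x^{2})^{a}(yz)^{b}(xz)^{c}$ with $a+b+c=n$, i.e.\ $x^{2a+c}y^{b}z^{b+c}$, whose weights are $(2a-b,\,-c)$. The convex hull of these is the triangle with vertices $(2n,0)$, $(-n,0)$, $(0,-n)$, which has nonempty interior, so the argument ``$\bar{wt}$ is contained in a proper affine subspace of $\mathbb{R}^{2}$'' fails for $(\star)$. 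What does work is that every weight has second coordinate $-c\le 0$, so $\bar{wt}$ lies in the closed lower half-plane and $0$ sits on its boundary edge; hence $0\notin \mathrm{interior}(\bar{wt})$ and $(\star)$ is still not stable by Theorem \ref{thm:Hilbert-Mumford}. With that computation supplied in place of your sketch, the three cases close and the proof matches the paper's.
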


\begin{proof}
We use the same notation in the previous lemma. It is easy that $\pd(\text{*})=\pd(\star)=2n-1$. So we need to check the stability of (*), $(\star)$. Since (*) is $(x^{2}-yz)(x^{2}-2yz) \cdots (x^{2}-nyz)$ and $(\star)$ is $(x^{2} -yz+xz) \cdots (x^{2}-yz+nxz)$, by Hilbert-Mumford Criterion, they are strictly semi-stable. So, since a P\l oski curve, (*), and $(\star)$ are strictly-semistable, if $C$ is stable, $\pd(C) > 2n-1$. 
\end{proof}

The following lemma is an immediate consequence of the previous lemmas. 

\begin{lem} \label{lem:conics}
Let $C$ be as in Lemma \ref{lem:polar degree of conics}. Then, 
\begin{enumerate}[1)]
\item $C$ is a P\l oski curve if and only if $\pd(C) = n$ 
\item $C$ is either (*) or ($\star$) in Lemma \ref{lem:impossible conics} if and only if $\pd(C) = 2n-1$ 
\item $\pd(C) > 2n-1$, otherwise. 
\end{enumerate}
\end{lem}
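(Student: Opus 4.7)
The plan is to read this lemma off as a repackaging of Lemmas \ref{lem:impossible conics} and \ref{lem:polar degree of conics}, together with a direct polar-degree computation for the curves (*) and ($\star$). I would handle the three parts in sequence, keeping $n \ge 2$ throughout so that the two numerical values $n$ and $2n-1$ are distinct; the degenerate case $n=1$ is trivial.

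For part (1), the forward direction is immediate from Lemma \ref{lem:first}, which gives $\pd(C)=n$ for any P\l oski curve of degree $2n$. For the converse, assume $\pd(C)=n$. Since $n \le 2n-1$, Lemma \ref{lem:impossible conics} forces $C$ to be a P\l oski curve, (*), or ($\star$). Once I have computed $\pd(*)=\pd(\star)=2n-1$ (which I do below in part (2)), these latter two possibilities are excluded by the strict inequality $n < 2n-1$.

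For part (2), the first job is the direct computation. Using the explicit normal forms $(x^{2}-iyz)$, respectively $(x^{2}-yz+ixz)$, written down in the proof of Lemma \ref{lem:polar degree of conics}, I would check that any two distinct conic components meet precisely at the two points $[0,0,1]$ and $[0,1,0]$, and that this pairwise intersection is common to every triple. Inductively adding one conic at a time via Lemma \ref{lem:polar degree formula} then contributes $1 + 2 - 1 = 2$ each time past the first component, yielding $\pd(*)=\pd(\star)=1 + 2(n-1)=2n-1$. For the converse, assume $\pd(C)=2n-1$; Lemma \ref{lem:impossible conics} again places $C$ among the three possibilities, and the P\l oski case is excluded by $\pd=n<2n-1$. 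Part (3) is just the contrapositive of Lemma \ref{lem:impossible conics}: if $C$ is none of the three listed curves then $\pd(C)>2n-1$.

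The only non-bookkeeping step is the explicit intersection verification for (*) and ($\star$). For (*), subtracting $x^{2}-iyz$ from $x^{2}-jyz$ gives $(j-i)yz=0$, so $y=0$ or $z=0$, each forcing $x=0$; for ($\star$), subtracting $x^{2}-yz+ixz$ from $x^{2}-yz+jxz$ gives $(j-i)xz=0$, so $x=0$ or $z=0$, and in either case the other defining equation collapses to $yz=0$ or $x=0$, again leaving only $[0,0,1]$ and $[0,1,0]$. This check is elementary, so I expect no serious obstacle in the argument; the essential content of the statement has already been absorbed in the two preceding lemmas, and the present lemma is the clean summary they were engineered to produce.
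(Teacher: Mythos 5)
Your proof is correct and follows essentially the same route as the paper, which offers no proof at all beyond calling the lemma ``an immediate consequence of the previous lemmas'' and asserting $\pd(*)=\pd(\star)=2n-1$ without computation inside the proof of Lemma \ref{lem:polar degree of conics}; your explicit intersection check and inductive application of Lemma \ref{lem:polar degree formula} supply exactly the missing details. (Note only that you have tacitly, and sensibly, dropped the stability hypothesis carried over from Lemma \ref{lem:polar degree of conics}, under which parts 1) and 2) would be vacuous.)
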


Now, we are ready to get an upper bound for the Milnor sum of (semi)stable curves.

\begin{prop}
Let $C$ be a plane curve with $\deg C = d \ge 5$ that has either a line or a conic as an irreducible component. Suppose $C$ is stable. Then, $\pd(C) \ge d-2$. 
\end{prop}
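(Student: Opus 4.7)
The plan is to split into two cases according to which low-degree component $C$ contains, and in each case to apply Lemma~\ref{lem:polar degree formula} to peel off that component. In Case 1, $C = L \cdot C'$ with $L$ a line and $\deg C' = d - 1$; since $\pd(L) = 0$ by Lemma~\ref{lem:2.4}, the formula gives $\pd(C) = \pd(C') + \sharp(L \cap C') - 1$, and the target becomes $\pd(C') + \sharp(L \cap C') \ge d - 1$. I would first rule out $C'$ being concurrent lines, since otherwise $C$ would carry a point of multiplicity at least $d - 1$, violating the stability bound of Proposition~\ref{prop:unstability} for $d \ge 5$. Proposition~\ref{prop:general} then yields $\pd(C') \ge \lfloor (d-1)/2 \rfloor$.

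In Case 2 I may assume $C$ has no line component and write $C = Q \cdot C'$ with $Q$ a smooth conic and $\deg C' = d - 2$. Since $\pd(Q) = 1$ by Lemma~\ref{lem:2.4}, the formula gives $\pd(C) = \pd(C') + \sharp(Q \cap C')$, and the target is $\pd(C') + \sharp(Q \cap C') \ge d - 2$. Because $C$ has no line component, $C'$ has no line components either, hence $C'$ is not concurrent lines, and Proposition~\ref{prop:general} gives $\pd(C') \ge \lfloor (d-2)/2 \rfloor$. In both cases, if the low-degree component meets $C'$ transversally in the maximal number of points permitted by B\'ezout, the desired bound is immediate, and the remaining work deals with configurations involving tangencies or passings through singular points of $C'$.

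The main obstacle is the extremal situation where $\pd(C')$ attains the minimum $\lfloor \deg C' / 2 \rfloor$. By Theorem~\ref{thm:Ploski} applied to $C'$ (valid when $\deg C' \ge 5$), this forces $C'$ to be a P\l oski curve, which has a unique singular point $P$ through which all irreducible components pass sharing a common tangent direction. I would then run a subcase analysis on the position of $L$ (resp.~$Q$) relative to $P$ and the common tangent. The crucial observation is that the worst-case incidence — $L$ coinciding with the common tangent at $P$ in Case 1, or $Q$ sharing fourth-order contact with the P\l oski conics at $P$ in Case 2 — produces $C$ itself as an odd P\l oski curve or an even P\l oski curve, both excluded by the strict stability hypothesis via Proposition~\ref{prop:criterion}. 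Any weaker incidence at $P$ results in a controlled drop of the local intersection multiplicity at $P$ that is exactly compensated by additional distinct intersection points elsewhere via a component-by-component B\'ezout count on the P\l oski conics, yielding $\pd(C) \ge d - 2$. The boundary degrees $d \in \{5, 6\}$ where Theorem~\ref{thm:Ploski} does not apply to $C'$ would be handled by a direct small-degree computation in the style of Section~3.
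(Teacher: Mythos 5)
Your peeling strategy has a genuine gap: the inequality you need is $\pd(C') + \sharp(L\cap C') \ge d-1$ (resp.\ $\pd(C')+\sharp(Q\cap C')\ge d-2$), and Proposition~\ref{prop:general} together with Theorem~\ref{thm:Ploski} only controls the two ends of the range of $\pd(C')$. If $\pd(C')$ equals the minimum $\lfloor \deg C'/2\rfloor$, you can invoke the P\l oski classification and your subcase analysis at the common point $P$ is plausible; if $\pd(C')\ge d-3$, one intersection point suffices. But for every intermediate value $\lfloor \deg C'/2\rfloor < \pd(C') < d-3$ you must still produce roughly $\lceil \deg C'/2\rceil - j$ \emph{distinct} points of $L\cap C'$, and nothing in the paper (or in your outline) classifies curves whose polar degree is only slightly above the minimum, except for unions of conics (Lemmas~\ref{lem:impossible conics} and \ref{lem:conics}, which you never invoke). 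The sentence ``any weaker incidence at $P$ is exactly compensated by additional distinct intersection points elsewhere'' is asserted only for $C'$ a P\l oski curve; it is precisely the missing quantitative trade-off in the intermediate range, and it is where the entire difficulty of the proposition lives. A further structural problem: stability of $C$ does not descend to $C'$ (e.g.\ $C'$ can be an even P\l oski curve or contain a large pencil of concurrent lines), so you cannot run an induction on the proposition itself, and peeling a single component loses the global information that the paper exploits.

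The paper's proof takes a different decomposition: it groups \emph{all} lines into $D$, \emph{all} conics into $E$, and \emph{all} components of degree $\ge 3$ into $F$, then uses (i) the stability bound $t\le \frac{2d}{3}$ on the maximal number of concurrent lines via Proposition~\ref{prop:unstability}, (ii) the classification of low-polar-degree unions of conics (Lemmas~\ref{lem:impossible conics}--\ref{lem:conics}), and (iii) an explicit algebraic argument showing that a higher-degree irreducible component must meet a P\l oski family of conics in at least $m+1$ points ($\sharp(E\cap F)\ge m+1$). These three ingredients are exactly what supply the intersection-point counts your outline leaves unproved. To repair your argument you would either have to reprove these lemmas or restructure along the paper's lines; as written, the proposal does not close.
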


\begin{proof}
We consider the following 3 cases: 

Case 1) First, let $C=C_{1} \cdots C_{m}C_{m+1} \cdots C_{k}$, where $\deg C_{i}=1$ for $1 \le i \le m$, $\deg C_{i}=2$ for $m+1 \le i \le k$. For convenience, let $D=C_{1} \cdots C_{m}$, $E=C_{m+1} \cdots C_{k}$. If $D=\emptyset$, we already proved it. So let $D \neq \emptyset$. Also, let us consider the case when $E=\emptyset$,i.e. $C=D$. By reordering, if necessary, let $D=C_{1} \cdots C_{t}C_{t+1} \cdots C_{d}$, where $t$ is the maximal number of concurrent lines in $D$ and $C_{1} \cdots C_{t}$ is concurrent lines. By the stability condition, $2 \le t \le \frac{2d}{3}$ (See Proposition \ref{prop:unstability}). Then, $\pd(D) \ge -t^{2}+(d+1)t-d$ (because $\sharp(C_{1} \cdots C_{i} \cap C_{i+1})=1$ for all $i=1, \cdots, t-1$ and $\ge t$ for all $i \ge t$). So the minium occurs when $t=2$. So $\pd(D) \ge d-2$. So we also let $E \neq \emptyset$. First, let us consider the case when $E$ is a P\l oski curve. If $D$ is nonconcurrent lines, then we can easily get the result. So suppose that $D$ is concurrent lines. If common points of $D$ and $E$ coincide, then by using $\sharp(D \cap E) \ge 1+(m-1)(k-m)$, $k \le \frac{2d}{3}$, and $2k-m=d$, we can easily get that $\pd(C) \ge d-2$. Also, if they do not coincide, it is easy that $\pd(C) \ge d-2$ when $m=1, \cdots 5$. For $m \ge 6$, by using $\sharp(D \cap E) \ge 1+(m-1)(2(k-m)-1)$, $m \le \frac{2d}{3}$, and $k=\frac{d+m}{2} \le \frac{5d}{6}$, we can get $\pd(C) \ge d-2$. If $E$ is not a P\l oski curve, by Lemma \ref{lem:conics}, we get the result. 

Case 2) Next, let $C=C_{1} \cdots C_{m}C_{m+1} \cdots C_{k}$, where $\deg C_{i}=2$ for $1 \le i \le m$, $\deg C_{i} \ge 3$ for $m+1 \le i \le k$. Let $E=C_{1} \cdots C_{m}$, $F=C_{m+1} \cdots C_{k}$. By the given condition, $E \neq \emptyset$. Also, by Lemma \ref{lem:conics}, we also let $F \neq \emptyset$, i.e. $1 \le m <k$. First, we suppose that $E$ is a P\l oski curve. So we assume that $E: (x^{2}-yz) \cdots (x^{2}-yz + (m-1)z^{2})$. We claim that $\sharp(E \cap F) \ge m+1$. If one of $C_{i}$, $m+1 \le i \le k$, does not pass $[0, 1, 0]$, we are done. So let all $C_{i}$'s pass through $[0, 1, 0]$. The case when m=1 is obtained automatically by proof of the case when $m \ge 2$. So let $m \ge 2$. Fix $m+1 \le i \le k$. Suppose $C_{i} \cap E = \{[0, 1, 0]\}$. Then, since $C_{i} \cap C_{1} = \{[0, 1, 0]\}$, $C_{i}: (x^{2}-yz)f + z^{l_{i}}$, or $(x^{2}-yz)f + x^{l_{i}}$, where $f$ is a homogeneous polynomial of degree $l_{i}-2$ in $k[x,y,z]$. Second one can be proven similarly as the first one, so we assume that $C_{i}: (x^{2}-yz)f + z^{l_{i}}$. Since $C_{i} \cap C_{2} = \{[0, 1, 0]\}$, $(x^{2}-yz)f + z^{l_{i}} = $ $z^{2}f+z^{l_{i}}=z^{2}(f+z^{l_{i}-2})$ has $z=0$ as a unique root. Since base field is algebraically closed, $f=az^{l_{i}-2}$, where $a \in k$, base field. So $C_{i}: (x^{2}-yz)(az^{l_{i}-2}) + z^{l_{i}}$, which is a contradiction since $C_{i}$ is irreducible. So $C_{i}$ has another intersetion point with $C_{1}$, which means that $I_{[0,1,0]}(C_{i} \cap C_{j}) < (\deg C_{i})(\deg C_{j}) = 2\deg C_{i}$, where $1 \le j \le m$, and $I_{[0,1,0]}(C_{i} \cap C_{j})$ is the intersection multiplicity of $C_{i}$ and $C_{j}$ at $[0, 1, 0]$. So $\sharp(C_{i} \cap C_{j}) \ge 2$, all $1 \le j \le m$. Therefore, $\sharp(E \cap F) \ge m+1$, which proves the claim. By the claim, $\sharp(E \cap C_{i}) \ge m+1$ for all $i \ge m+1$. So $\pd(C_{1} \cdots C_{k})=\pd(E)+\pd(C_{m+1})+\cdot+\pd(C_{k})+(\sharp(E \cap C_{m+1}) + \cdots + \sharp(EC_{m+1} \cdots C_{k-1} \cap C_{k}))-(k-m) \ge d-1 \ge d-2$ since $k-m \ge 1$. So let us consider the case when $E$ is not a P\l oski curve. However, by using $\sharp(E \cap C_{j}) \ge 2$ for all $j$ with $m+1 \le j \le k$ and Lemma \ref{lem:conics}, we easily get $\pd(C) \ge d-1 \ge d-2$. 

Case 3) In general, let $C=C_{1} \cdots C_{m}C_{m+1} \cdots C_{t}C_{t+1} \cdots C_{k}$, where $\deg C_{i}=1$ for $1 \le i \le m$, $\deg C_{i}=2$ for $m+1 \le i \le t$, $\deg C_{i} \ge 3$ for $t+1 \le i \le k$. For convenience, let $D=C_{1} \cdots C_{m}$, $E=C_{m+1} \cdots C_{t}$, $F=C_{t+1} \cdots C_{k}$. If $D=\emptyset$, $C$ is in Case 2), so let $D \neq \emptyset$. If $F=\emptyset$, then it is Case 1), so let $F \neq \emptyset$. Therefore, we need to deal with $E$. First, let $E \neq \emptyset$. If $D$ is concurrent lines, then $\pd(C)=\pd(D)+\pd(EF)+\sharp(D \cap EF)-1 \ge (d-m-1)+(1+(t-m)(m-1))-1 \ge d-2$ because $t-m \ge 1$ and Case 2) always holds without the stability condition. If $D$ is not concurrent lines, since $\pd(D) \ge m-2$ and $\pd(EF) \ge d-m-1$, $\pd(C) \ge d-2$. Finally, we suppose that $E=\emptyset$. If $D$ is concurrent lines, $\pd(C)=\pd(D)+\pd(C_{m+1}) + \cdots + \pd(C_{k})+(\sharp(D \cap C_{m+1}) + \cdots + \sharp(DC_{m+1} \cdots C_{k-1} \cap C_{k}))-(k-m) \ge \sum_{i=m+1}^{k}(\deg C_{i}-1)+(\sum_{p \in C_{m+1}}(r_{p}-1)+\sharp(D \cap C_{m+1}))+ \cdots + (\sum_{p \in C_{k}}(r_{p}-1)+\sharp(DC_{m+1} \cdots C_{k-1} \cap C_{k}))-(k-m) \ge d-2$. If $D$ is not concurrent lines, then by using $\sum_{p \in C_{i+1}}(r_{p}-1)+ \sharp(DC_{m+1} \cdots C_{i} \cap C_{i+1}) \ge m$ for all $i \ge m+1$, $\pd(D) \ge m-2$ and the above argument, $\pd(C) \ge (d-2)+(m-2)(k-m)$. Since $D$ is not concurrent lines and $F \neq \emptyset$, $m \ge 3$ and $k-m \ge 1$. So $\pd(C) \ge d-1 \ge d-2$.   
\end{proof}

By the previous proposition, we get a bound for the polar degree of stable curves. If $C$ is of odd degree, since a P\l oski curve is not semi-stable, by the same argument, we can get the same result for semi-stable curve $C$ as the following proposition says.

\begin{prop}
Let $C$ be a plane curve with $\deg C = d \ge 5$ that has either a line or a conic as an irreducible component, where $d$ is odd. Suppose $C$ is semi-stable. Then, $\pd(C) \ge d-2$. 
\end{prop}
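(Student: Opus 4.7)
The plan is to adapt the proof of the preceding proposition almost verbatim, with two observations tailored to the semi-stable odd-degree setting. First, by Proposition \ref{prop:unstability} the semi-stability of $C$ already forces every singular point of $C$ to have multiplicity at most $2d/3$, so the bound $t \le 2d/3$ on the number of lines through a common point that was used repeatedly in the stable case continues to apply. Second, by Proposition \ref{prop:criterion} an odd P\l oski curve is unstable; since $d$ is odd, a semi-stable $C$ cannot itself be a P\l oski curve, and Theorem \ref{thm:Ploski} therefore gives $\pd(C) > \lfloor d/2 \rfloor$ at no additional cost.

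I would then split into the same three cases as in the previous proposition, decomposing $C = D \cdot E \cdot F$ according to whether the irreducible components have degree $1$, $2$, or $\ge 3$, and applying Lemma \ref{lem:polar degree formula} iteratively. For the line part $D$ one uses the quadratic estimate $\pd(D) \ge -t^{2} + (m+1)t - m$ (with $m = \deg D$) on $t \in [2, 2d/3]$, whose minimum on this interval is attained at the endpoint $t = 2$ and equals $d - 2$. For the conic part $E$ one appeals to Lemma \ref{lem:conics}: either $\pd(E) > \deg E - 1$, or $E$ is a P\l oski curve, a configuration of type $(*)$, or one of type $(\star)$. In each exceptional case, the intersection bounds $\sharp(D \cap E) \ge 1 + (m-1)(k-m)$ or $\sharp(D \cap E) \ge 1 + (m-1)(2(k-m)-1)$, combined with the semi-stability multiplicity bound, still push $\pd(C)$ up to at least $d - 2$.

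The main obstacle, as in the stable case, is Case 1 (lines and conics only) with $E$ a P\l oski curve and $D$ a set of concurrent lines. Parity enters decisively here: $\deg E$ is even and $d$ is odd, so $m = \deg D$ is automatically odd and in particular $m \ge 1$. The same combinatorial bookkeeping as before then rules out the borderline configurations without needing strict stability, since the configurations $(*)$ and $(\star)$ of Lemma \ref{lem:conics} have even degree and cannot equal the whole of $C$, which simplifies the top-level bookkeeping compared with the stable case. Once every subcase is verified in this way, the conclusion $\pd(C) \ge d - 2$ follows.
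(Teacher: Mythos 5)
Your proposal is correct and follows essentially the same route as the paper, which in fact writes out no separate proof of this proposition: it only remarks that the argument for the stable case carries over because the multiplicity bound of Proposition \ref{prop:unstability} already holds under semi-stability and an odd P\l oski curve is unstable by Proposition \ref{prop:criterion}. Your additional observation that the even-degree exceptional configurations $(*)$ and $(\star)$ of Lemma \ref{lem:conics} cannot equal the whole of an odd-degree curve supplies exactly the detail the paper leaves implicit.
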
    
      
So we need to consider the case when all irreducible components of $C$ are of $\deg \ge 3$. The following lemma gives a better bound of such a curve. 

\begin{lem}
Let $\deg C=d \ge 5$. Suppose all irreducible components of $C$ are of $\deg \ge 3$. Then, $\pd(C) \ge \lceil{\frac{2d}{3}}\rceil$, where $\lceil{\frac{2d}{3}}\rceil$ is a round up integer of $\frac{2d}{3}$.  
\end{lem}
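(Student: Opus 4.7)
The plan is to combine the polar-degree formulas of Lemma \ref{lem:2.4} and Lemma \ref{lem:polar degree formula} with the hypothesis that every irreducible component of $C$ has degree at least $3$. Write $C = C_1 \cdots C_k$ with $C_i$ irreducible of degree $l_i \ge 3$ and $\sum_{i=1}^k l_i = d$. The degree restriction immediately yields $k \le \lfloor d/3 \rfloor$, and this is the only place where the hypothesis of the lemma enters.

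From Lemma \ref{lem:2.4}, for each irreducible component one has $\pd(C_i) = l_i - 1 + 2 p_g(C_i) + \sum_p (r_p - 1) \ge l_i - 1$, since the geometric genus is non-negative and the number of branches at each point is at least $1$. Iterating Lemma \ref{lem:polar degree formula} over the $k$ components gives $\pd(C) = \sum_{i=1}^k \pd(C_i) + \sum_{i=2}^k \sharp(C_1 \cdots C_{i-1} \cap C_i) - (k-1)$, and since distinct irreducible components share no common factor, B\'ezout forces $\sharp(C_1 \cdots C_{i-1} \cap C_i) \ge 1$ for each $i \ge 2$. Combining these estimates yields $\pd(C) \ge (d-k) + (k-1) - (k-1) = d-k \ge d - \lfloor d/3 \rfloor = \lceil 2d/3 \rceil$, where the last equality is a routine check over the three residue classes of $d$ modulo $3$.

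There is no serious geometric obstacle here: unlike the conic case handled in Lemma \ref{lem:conics}, which required a delicate analysis of common tangents and a classification of configurations, the degree-$\ge 3$ hypothesis on each component is so restrictive that the crudest estimate $\pd(C_i) \ge l_i - 1$ together with the trivial intersection bound $\sharp \ge 1$ already suffice. The only arithmetic point worth verifying is the identity $d - \lfloor d/3 \rfloor = \lceil 2d/3 \rceil$, which I would handle separately for $d \equiv 0, 1, 2 \pmod 3$.
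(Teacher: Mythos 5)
Your proof is correct and follows essentially the same route as the paper: decompose $C$ into irreducible components, use $\pd(C_i)\ge l_i-1$ from Lemma \ref{lem:2.4} together with the iterated form of Lemma \ref{lem:polar degree formula} to get $\pd(C)\ge d-k$, and then bound $k\le \lfloor d/3\rfloor$ from the degree hypothesis. The paper's separation of components into degree $\ge 4$ and degree exactly $3$ is cosmetic and collapses to the same estimate $d-k$, so there is no substantive difference.
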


\begin{proof}
Let $C=C_{1} \cdots C_{m}C_{m+1} \cdots C_{k}$, where $C_{i}$'s are irreducible, plane curves with $\deg C_{i} \ge 4$ for $1 \le i \le m$, $\deg C_{j} = 3$ for $m+1 \le j \le k$. Let $D=C_{1} \cdots C_{m}$, $E=C_{m+1} \cdots C_{k}$. Then, $\pd(C) = \pd(D)+\pd(E)$ $+\sharp(D \cap E)-1 \ge \sum_{i=1}^{m}(\deg C_{i}-1)+2(k-m)=(d-3(k-m))-m+2(k-m) = d-k$. Since $3k \le d$ by degree consideration, $k \le \frac{d}{3}$. So $\pd(C) \ge d-k \ge \frac{2d}{3}$, i.e. $\pd(C) \ge \lceil{\frac{2d}{3}}\rceil$.  
\end{proof}

So we get the following result:

\begin{thm} 
Let $\deg C=d \ge 5$. Then, we have the followings:
\begin{enumerate}[1)]
\item Suppose $C$ is a stable curve that has either a line or a conic as an irreducible component. Then $\sum \mu_{p} \le (d-1)^{2}-(d-2)$.
\item Let $d$ be odd. Suppose $C$ is a semi-stable curve that has either a line or a conic as an irreducible component. Then $\sum \mu_{p} \le (d-1)^{2}-(d-2)$.
\item Suppose all irreducible components of $C$ are of $\deg \ge 3$. Then $\sum \mu_{p} \le (d-1)^{2}-\lceil{\frac{2d}{3}}\rceil$. 
\end{enumerate}
\end{thm}

\bigskip
{\em Acknowledgements}. This work is part of my master thesis. I would like to thank my advisor Yongnam Lee, for his advice, encouragement and teaching. This work was supported by Basic Science Program through the National Research Foundation of Korea funded by the Korea government(MSIP)(No.2013006431).

\bigskip

\bibliographystyle{abbrv}

\end{document}